\documentclass{amsart}
\usepackage{amssymb}
\usepackage{mathrsfs}
\usepackage{cases}
\usepackage{amsmath}
\usepackage{amsfonts}
\usepackage{pifont}
\usepackage{graphicx,amssymb,mathrsfs,amsmath}
\usepackage{tocvsec2}
\baselineskip 18pt

\vfuzz2pt 
\hfuzz2pt 
\newtheorem{thm}{Theorem}[section]

\newtheorem{prop}[thm]{Proposition}

\theoremstyle{definition}
\newtheorem{defn}[thm]{Definition}
\newtheorem{example}[thm]{Example}
\theoremstyle{remark}
\newtheorem{rem}[thm]{Remark}
\numberwithin{equation}{section}

\begin{document}
\title[Metrical almost periodicity: ...]{Metrical almost periodicity: Levitan and Bebutov concepts}

\author{Belkacem Chaouchi}
\address{Lab. de l'Energie et des Syst\`{e}mes Intelligents,
Khemis Miliana University, 44225, Algeria}
\email{chaouchicukm@gmail.com}

\author{Marko Kosti\' c}
\address{Faculty of Technical Sciences,
University of Novi Sad,
Trg D. Obradovi\' ca 6, 21125 Novi Sad, Serbia}
\email{marco.s@verat.net}

\author{Daniel Velinov}
\address{Department for Mathematics, Faculty of Civil Engineering, Ss. Cyril and Methodius University, Skopje,
Partizanski Odredi
24, P.O. box 560, 1000 Skopje, N. Macedonia}
\email{velinovd@gf.ukim.edu.mk}

{\renewcommand{\thefootnote}{} \footnote{2010 {\it Mathematics
Subject Classification.} 42A75, 43A60, 47D99.
\\ \text{  }  \ \    {\it Key words and phrases.} 
Levitan metrical almost periodicity, Bebutov metrical almost periodicity, abstract Volterra integro-differential equations.
\\  \text{  }  
This research is partially supported by grant 174024 of
Ministry of Science and Technological Development, Republic of Serbia and
Bilateral project between MANU and SANU.}}

\begin{abstract}
In this paper, we analyze Levitan and Bebutov metrical approximations of functions $F :\Lambda \times X \rightarrow Y$ by trigonometric polynomials and $\rho$-periodic type functions, where $\emptyset \neq \Lambda \subseteq {\mathbb R}^{n}$, $X$ and $Y$ are complex Banach spaces, and $\rho$ is a general binary relation on $Y$. We also analyze various classes of multi-dimensional Levitan almost periodic functions in general metric and multi-dimensional Bebutov uniformly recurrent functions in general metric. 
We provide several applications of our theoretical results to
the abstract Volterra integro-differential equations and the partial differential equations.
\end{abstract}
\maketitle

\section{Introduction and preliminaries}

The notion of almost periodicity was introduced by the Danish mathematician H. Bohr around 1924-1926 and later generalized by many other authors (see the research monographs \cite{besik}, \cite{diagana}, \cite{fink}, \cite{gaston}, \cite{nova-mono}, \cite{nova-selected}, \cite{188}, \cite{levitan}, \cite{pankov} and \cite{30} for more details about almost periodic functions and their applications).
Suppose that $(X,\| \cdot \|)$ is a complex Banach space, and $F : {\mathbb R}^{n} \rightarrow X$ is a continuous function, where $n\in {\mathbb N}$. Then it is said that $F(\cdot)$ is almost periodic if and only if for each $\epsilon>0$
there exists a finite real number $l>0$ such that for each ${\bf t}_{0} \in {\mathbb R}^{n}$ there exists ${\bf \tau} \in B({\bf t}_{0},l)\equiv \{ {\bf t} \in {\mathbb R}^{n} : |{\bf t}-{\bf t}_{0}|\leq l\}$ such that
\begin{align*}
\bigl\|F({\bf t}+{\bf \tau})-F({\bf t})\bigr\| \leq \epsilon,\quad {\bf t}\in {\mathbb R}^{n};
\end{align*}
here, $|\cdot -\cdot|$ denotes the Euclidean distance in ${\mathbb R}^{n}.$
Equivalently, $F(\cdot)$ is almost periodic if and only if for any sequence $({\bf b}_k)$ in ${\mathbb R}^{n}$ there exists a subsequence $({\bf a}_{k})$ of $({\bf b}_k)$
such that the sequence of translations $(F(\cdot+{\bf a}_{k}))$ converges in $C_{b}({\mathbb R}^{n}: X),$ the Banach space of all bounded continuous functions on ${\mathbb R}^{n},$ equipped with the sup-norm. Any trigonometric polynomial in ${\mathbb R}^{n}$ is almost periodic, and a continuous function $F(\cdot)$ is almost periodic if and only if there exists a sequence of trigonometric polynomials in ${\mathbb R}^{n}$ which converges uniformly to $F(\cdot)$. 

A continuous function $F : {\mathbb R}^{n} \rightarrow X$ is said to be uniformly recurrent (uniformly Poisson stable) if and only if there exists a sequence $(\tau_{k})$ in ${\mathbb R}^{n}$
such that $\lim_{k\rightarrow +\infty}|\tau_{k}|=+\infty$ and $\lim_{k\rightarrow +\infty}F({\bf t}+\tau_{k})=F({\bf t}),$ uniformly in ${\bf t}\in {\mathbb R}^{n}$ (uniformly in ${\bf t}\in K$, for any compact subset $K\subseteq {\mathbb R}^{n}$). Any almost periodic function is uniformly recurrent and any uniformly recurrent function is uniformly Poisson stable, while the converse statements are not true in general; cf. also \cite{marko-manuel-ap,rho,metrical} for some recent results about almost periodic functions and uniformly recurrent functions.

Further on, let $F : {\mathbb R}^n \rightarrow X$ be a continuous function. Then it is said that the function $F(\cdot)$ is almost automorphic if and only if for every sequence $({\bf b}_{k})$ in $\mathbb{R}^n$ there exist a subsequence $({\bf a}_{k})$ of $({\bf b}_{k})$ and a mapping $G : {\mathbb R}^n \rightarrow X$ such that
\begin{align}\label{first-equ}
\lim_{k\rightarrow \infty}F\bigl( {\bf t}+{\bf a}_{k}\bigr)=G({\bf t})\ \mbox{ and } \  \lim_{k\rightarrow \infty}G\bigl( {\bf t}-a_{k}\bigr)=F({\bf t}),
\end{align}
pointwisely for ${\bf t}\in {\mathbb R}^n.$ The range of an almost automorphic function $F(\cdot)$ is relatively compact in $X,$ and the limit function $G(\cdot)$ is bounded on ${\mathbb R}^n$ but not necessarily continuous on ${\mathbb R}^n.$ If the convergence of limits appearing in \eqref{first-equ} is uniform on compact subsets of ${\mathbb R}^n,$ then we say that the function $F(\cdot)$ is compactly almost automorphic. It is well known that an almost automorphic function $F(\cdot)$ is compactly almost automorphic if and only if $F(\cdot)$ is uniformly continuous (cf. \cite{marko-manuel-aa} for further information about the multi-dimensional almost automorphic functions and their applications). 

On the other hand, the class of almost periodic functions can be generalized following the approach of B. M. Levitan (see, e.g., \cite{levitan} for the one-dimensional setting): Suppose that $F : {\mathbb R}^{n} \rightarrow X$ is a continuous function, $N>0$ and $\epsilon>0.$ Then a point  $\tau \in {\mathbb R}^{n}$ is said to be an $\epsilon,N$-almost period of function $F(\cdot)$ 
if and only if 
\begin{align*}
\bigl\|F({\bf t}+{\bf \tau})-F({\bf t})\bigr\| \leq \epsilon \mbox{ for all } {\bf t}\in {\mathbb R}^{n}\mbox{ with }|{\bf t}|\leq N;
\end{align*}
denote by $E(\epsilon, N)$ the set consisting of all $\epsilon,N$-almost periods of function $F(\cdot)$. 
Let us say, maybe for the first time in the existing literature, that a continuous function $F(\cdot)$ is Levitan pre-almost periodic if and only if for each  $N>0$ and $\epsilon>0,$ 
there exists a finite real number $l>0$ such that for each ${\bf t}_{0} \in {\mathbb R}^{n}$ there exists ${\bf \tau} \in B({\bf t}_{0},l) \cap E(\epsilon,N),$ i.e., the set $E(\epsilon,N)$ is relatively dense in ${\mathbb R}^{n}$ for each $N>0$ and $\epsilon>0.$ It is worth noting that B. Ya. Levin has shown, in \cite{levin}, that the sum of two Levitan
pre-almost periodic functions $f: {\mathbb R}\rightarrow {\mathbb R}$ and
$g: {\mathbb R}\rightarrow {\mathbb R}$
need not be Levitan pre-almost periodic, in general. In the definition of a Levitan ($N$-)almost periodic function $F :  {\mathbb R}^{n} \rightarrow X,$ we additionally require that, for every real numbers $N>0$ and $\epsilon>0,$ there exist a finite real number $\eta >0$ and a relatively dense set $E_{\eta;N}$ of $(\eta,N)$-almost periods of $F(\cdot)$ such that $E_{\eta;N} \pm E_{\eta;N} \subseteq E(\epsilon, N);$ cf. \cite[condition (2), p. 54, l. 3]{levitan} and the corresponding footnote for more details concerning this issue in the one-dimensional setting. Due to R. Yuan's result \cite[Theorem 3.1]{r-yuan-jde}, we know that a bounded continuous function $f: {\mathbb R}\rightarrow X$ is compactly almost automorphic if and only if $f(\cdot)$ is Levitan $N$-almost periodic (cf. also 
B. Basit's paper \cite{basit-trst}, A. Reich \cite{reich} and references cited therein for more details about the relationship between the almost automorphic functions and the Levitan $N$-almost periodic functions on topological groups).

The notion of a recurrent
function in the continuous Bebutov system \cite{bebutov} is based on the use of 
topology of uniform convergence on compact sets
(cf. also Subsection 2.3.9 in the monograph \cite{bertoti} by G. Bertotti and I. D. Mayergoyz,
the paper \cite{328} by L. I. Danilov and references cited therein for further information
in this direction). A uniformly recurrent function is also called pseudo-periodic by H.
Bohr, which has been accepted by many other authors later
on; a recurrent function in the continuous Bebutov system is also called (uniformly)
Poisson-stable motion by M. V. Bebutov. The Levitan almost periodic solutions and the uniformly Poisson stable solutions for various classes of (abstract) differential equations have been sought in many research articles by now; see, e.g., \cite{carpati,cheban,chebban,x-liu,lubar,nawr,scherb4,scherb5,scherb6} and references cited therein; it is also worth noting that M. Akhmet, M.
Tleubergenova and A. Zhamanshin have recently analyzed the existence and uniqueness of 
modulo periodic Poisson stable
solutions of quasi-linear differential
equations in \cite{entropy}.

Concerning the Poisson stability of motions of dynamical systems and solutions of
differential equations, we would like to specifically mention the research monographs \cite{scherb1}-\cite{scherb2} by B. A. Shcherbakov.
It is also worth noticing that T. Caraballo and D. Cheban have analyzed the existence and uniqueness of 
Levitan/Bohr almost periodic (almost automorphic)
solutions of the second-order monotone differential equations in \cite{cheban-lev}. The Poisson stability of motions for monotone nonautonomous dynamical systems and
of solutions for certain classes of monotone nonautonomous differential equations has been analyzed by D. Cheban and Z. Liu in \cite{cheban-lev1}; cf. also \cite{cheban-sch,scherb1,scherb2,scherb3,scherb6,sell-mono,shen-yi} and references cited therein for related results obtained within the theory of dynamical systems. Finally, let us note that the interpolation by Levitan almost periodic functions was considered by S. Hartman in \cite{hartmanlev} (1974), while the difference property for perturbations of vector-valued Levitan
almost periodic functions was considered by B. Basit and H. G\"unzler in \cite{basit-2005}; see also 
M. G. Lyubarskii \cite{lubar}.

On the other hand, the first systematic study of metrical almost periodicity was conducted by the second named author in 2021 (\cite{metrical}). The Stepanov, Weyl and Besicovitch classes of metrical $\rho$-almost periodic type functions have been considered in \cite{metrical-stepanov}, \cite{metrical-weyl} and \cite{multi-besik}, respectively. In our joint research article \cite{chaouchi-metr} with B. Chaouchi and D. Velinov, we have recently studied the metrical approximations of functions  $F :\Lambda \times X \rightarrow Y$ by trigonometric polynomials and $\rho$-periodic type functions, where $\emptyset \neq \Lambda \subseteq {\mathbb R}^{n}$, $X$ and $Y$ are complex Banach spaces, and $\rho$ is a general binary relation on $Y;$ we have also introduced and analyzed various classes of almost periodic functions and uniformly recurrent functions in general metric therein. 

The main aim of this research article is to continue the analysis raised in \cite{chaouchi-metr} by investigating the Levitan and Bebutov metrical approximations of functions $F :\Lambda \times X \rightarrow Y$ by trigonometric polynomials and $\rho$-periodic type functions. As mentioned in the abstract, we analyze here various classes of multi-dimensional Levitan almost periodic functions in general metric and multi-dimensional Bebutov uniformly recurrent functions in general metric. 

The organization and main ideas of this paper can be briefly described as follows. After explaining the notation and terminology used throughout the paper as well as the main concepts necessary for understanding anything that follows, we introduce the basic function spaces of metrically almost periodic functions in the sense of Levitan/Bebutov approach in the second section of paper; cf. Definition \ref{strong-appl-lev} and Definition \ref{dfggg-metkl}-Definition \ref{nafaks-metkl}. The main structural results established in this section are Proposition \ref{nafaks-metkljkl}, Proposition \ref{asad-lev} and Proposition \ref{rep}; cf. also Example \ref{kuchi}, Example \ref{kuchi123} and Example \ref{ferplay}. Levitan $(N,c)$-almost periodic functions and uniformly Poisson $c$-stable functions [multi-dimensional Levitan $N$-almost periodic functions] are specifically analyzed in Subsection \ref{zajeg} [Subsection \ref{kura-tura}]. In Subsection \ref{kura-tura}, we introduce the spaces of strongly Levitan $N$-almost periodic functions of type $1,$
the Levitan $N$-almost periodic functions of type $1$ and
the strongly Levitan $N$-almost periodic functions. We show that these spaces have the linear vector structure and propose an open problem whether the Levitan $N$-almost periodic functions form a vector space with the usual operations if $n\geq 2.$ By a simple counterexample, we show that the Bogolybov theorem (see, e.g., \cite[pp. 55-57]{levitan}) cannot be straightforwardly extended to the higher-dimensional case $n\geq 2$; cf. also Proposition \ref{bengeder} and Example \ref{lalala}.

In Section \ref{bebprim}, we present several applications of our theoretical results to the abstract Volterra integro-differential equations. Subsection \ref{tackice} is devoted to the study of invariance of Levitan $N$-almost like periodicity under the actions of the infinite convolution products and certain applications to the abstract Cauchy problems without initial conditions; Subsection \ref{216} investigates the convolution invariance of certain kinds of multi-dimensional Levitan $N$-almost periodic type functions (the second application of this subsection, concerning the fractional diffusion-wave equations with Caputo-Dzhrbashyan fractional derivatives, and the third application of this subsection, concerning the biharmonic partial differential operator, are essentially new and not considered anywhere else in the existing literature). In this subsection, we also reconsider several results established recently by A. Nawrocki in \cite{nawr}.  Subsection \ref{21666} continues our analysis of
the wave equation in ${\mathbb R}^{n}$; the final section of paper is reserved for some comments and final remarks about the introduced spaces of Levitan $N$-almost periodic type functions. We propose several open problems to our readers, providing also a great number of important references concerning the subjects under our considerations.
\vspace{1.6pt}

\noindent {\bf Notation and terminology.} Suppose that $X,\ Y,\ Z$ and $ T$ are given non-empty sets. Let us recall that a binary relation between $X$ and $Y$
is any subset
$\rho \subseteq X \times Y.$ 
If $\rho \subseteq X\times Y$ and $\sigma \subseteq Z\times T$ with $Y \cap Z \neq \emptyset,$ then
we define
$\sigma \cdot  \rho =\sigma \circ \rho \subseteq X\times T$ by
$
\sigma \circ \rho :=\{(x,t) \in X\times T : \exists y\in Y \cap Z\mbox{ such that }(x,y)\in \rho\mbox{ and }
(y,t)\in \sigma \}.
$
As is well known, the domain and range of $\rho$ are defined by $D(\rho):=\{x\in X :
\exists y\in Y\mbox{ such that }(x,y)\in X\times Y \}$ and $R(\rho):=\{y\in Y :
\exists x\in X\mbox{ such that }(x,y)\in X\times Y\},$ respectively; $\rho (x):=\{y\in Y : (x,y)\in \rho\}$ ($x\in X$), $ x\ \rho \ y \Leftrightarrow (x,y)\in \rho .$
If $\rho$ is a binary relation on $X$ and $n\in {\mathbb N},$ then we define $\rho^{n}
$ inductively. Set $\rho (X'):=\{y : y\in \rho(x)\mbox{ for some }x\in X'\}$ ($X'\subseteq X$).

We will always assume henceforth that $(X,\| \cdot \|)$ and $(Y, \|\cdot\|_Y)$ are complex Banach spaces, $n\in {\mathbb N},$ $\emptyset  \neq \Lambda \subseteq {\mathbb R}^{n},$ and
${\mathcal B}$ is a non-empty collection of non-empty subsets of $X$ satisfying
that
for each $x\in X$ there exists $B\in {\mathcal B}$ such that $x\in B.$ For the sequel, we set
$$
\Lambda'':=\bigl\{ \tau \in {\mathbb R}^{n} : \tau +\Lambda \subseteq \Lambda \bigr\}.
$$
By
$L(X,Y)$ we denote the Banach space of all bounded linear operators from $X$ into
$Y,$ $L(X,X)\equiv L(X);$ ${\mathrm I}$ denotes the identity operator on $Y.$ 
Define 
${\mathbb N}_{n}:=\{1,..., n\}.$ If ${\mathrm A}$ and ${\mathrm B}$ are non-empty sets, then we define ${\mathrm B}^{{\mathrm A}}:=\{ f | f : {\mathrm A} \rightarrow {\mathrm B}\}.$ By $\| \cdot \|_{\infty}$ we denote the sup-norm; the symbol $f_{|K}(\cdot)$ denotes the restriction of a function $f(\cdot)$ to a non-empty subset $K$ of its domain.

Suppose now that $\nu : \Lambda\rightarrow (0,\infty)$ and the function $1/\nu(\cdot)$ is locally bounded. Then the vector space $C_{b,\nu}(\Lambda : Y)$ consists of all continuous functions $u : \Lambda \rightarrow
Y$ such that $\sup_{{\bf t}\in \Lambda}\|u({\bf t})\|_{Y}\nu({\bf t})<+\infty$. Equipped with the norm
$\|\cdot\|:=\sup _{{\bf t}\in \Lambda}\|\nu({\bf t}) \cdot({\bf t})\|_{Y},$ $C_{b,\nu}(\Lambda : Y)$ is a Banach space.

We need to recall the following notion (\cite{nova-selected}):

\begin{defn}\label{drasko-presing}
\begin{itemize}
\item[(i)]
Let ${\bf \omega}\in {\mathbb R}^{n} \setminus \{0\},$ $\rho$ be a binary relation on $Y$ 
and 
${\bf \omega}\in \Lambda''$. A continuous
function $F:\Lambda \times X\rightarrow Y$ is said to be $({\bf \omega},\rho)$-periodic [$\rho$-periodic] if and only if 
$
F({\bf t}+{\bf \omega};x)\in \rho(F({\bf t};x)),$ ${\bf t}\in \Lambda,$ $x\in X$ [there exists ${\bf \omega}\in ({\mathbb R}^{n} \setminus \{0\}) \cap \Lambda''$ such that $F(\cdot;\cdot)$ is $({\bf \omega},\rho)$-periodic]. 
\item[(ii)]
Let ${\bf \omega}_{j}\in {\mathbb R} \setminus \{0\},$ $\rho_{j}\in {\mathbb C} \setminus \{0\}$ be a binary relation on $Y$
and 
${\bf \omega}_{j}e_{j}+\Lambda \subseteq \Lambda$ ($1\leq j\leq n$). A continuous
function $F:\Lambda \times X \rightarrow Y$ is said to be $({\bf \omega}_{j},\rho_{j})_{j\in {\mathbb N}_{n}}$-periodic [$(\rho_{j})_{j\in {\mathbb N}_{n}}$-periodic] if and only if 
$
F({\bf t}+{\bf \omega}_{j}e_{j};x)\in \rho_{j}(F({\bf t};x)),$ ${\bf t}\in \Lambda,
$ $x\in X,$ $j\in {\mathbb N}_{n}$ [there exist non-zero real numbers $\omega_{j}$ such that ${\bf \omega}_{j}e_{j}\in \Lambda''$ for all $j\in {\mathbb N}_{n}$ and $F(\cdot;\cdot)$ is $({\bf \omega}_{j},\rho_{j})_{j\in {\mathbb N}_{n}}$-periodic].
\item[(iii)] Let ${\bf \omega}_{j}\in {\mathbb R} \setminus \{0\},$ 
and 
${\bf \omega}_{j}e_{j}+\Lambda \subseteq \Lambda$ ($1\leq j\leq n$). A continuous
function $F:\Lambda \times X \rightarrow Y$ is said to be periodic if and only if $F(\cdot;\cdot)$ is $(\rho_{j})_{j\in {\mathbb N}_{n}}$-periodic with $\rho_{j}={\rm I}$ for all $j\in {\mathbb N}_{n}.$
\end{itemize}
\end{defn} \index{function!$({\bf \omega}_{j},\rho_{j})_{j\in {\mathbb N}_{n}}$-periodic}

Finally, let us recall that a trigonometric polynomial $P : \Lambda \times X \rightarrow Y$ is any linear combination of functions like
\begin{align*}
e^{i[\lambda_{1}t_{1}+\lambda_{2}t_{2}+\cdot \cdot \cdot +\lambda_{n}t_{n}]}c(x),
\end{align*}
where $\lambda_{i}$ are real numbers ($1\leq i \leq n$) and $c: X \rightarrow Y$ is a continuous mapping.

\section{Metrical approximations: Levitan and Bebutov concepts}\label{docolord}

In this section, we assume that $\emptyset \neq \Lambda \subseteq {\mathbb R}^{n}$ and
$\phi : [0,\infty) \rightarrow [0,\infty).$ 
If $\emptyset \neq K\subseteq {\mathbb R}^{n}$ is a compact set and $K\cap \Lambda \neq \emptyset$, then we assume that the function ${\mathbb F}_{K} : K\cap \Lambda \rightarrow [0,\infty)$ and the pseudometric space
${\mathcal P}_{K}=(P_{K},d_{K})$, where $P_{K}\subseteq [0,\infty)^{K\cap \Lambda}$, are given.
Define $\| g\|_{P_{K}}:=d_{K}(g,0)$ for any $g\in P_{K}.$

The following notion plays an important role in our analysis (cf. also \cite[Definition 2.1]{chaouchi-metr}):

\begin{defn}\label{strong-appl-lev} 
Suppose that $\emptyset  \neq \Lambda \subseteq {\mathbb R}^{n}$ and $F :\Lambda  \times X \rightarrow Y.$
Then we say that $F(\cdot;\cdot)$ is:
Levitan strongly $(\phi,{\mathbb F}_{\bf K},{\mathcal B},{\mathcal P}_{\bf K})$-almost periodic (Levitan semi-\\$(\phi,\rho,{\mathbb F}_{\bf K},{\mathcal B},{\mathcal P}_{\bf K})$-periodic, Levitan semi-$(\phi,\rho_{j},{\mathbb F}_{\bf K},{\mathcal B},{\mathcal P}_{\bf K})_{j\in {\mathbb N}_{n}}$-periodic) if and only if for each $B\in {\mathcal B}$ and for each non-empty compact set $ K\subseteq {\mathbb R}^{n}$ such that $K\cap \Lambda \neq \emptyset$ there exists a sequence ($P_{k}^{B,K}:  \Lambda \times X \rightarrow Y$) of trigonometric polynomials ($\rho$-periodic functions ($P_{k}^{B,K}:  \Lambda \times X \rightarrow Y$), $(\rho_{j})_{j\in {\mathbb N}_{n}}$-periodic functions $(P_{k}^{B,K}:  \Lambda \times X \rightarrow Y$))
such that ${\mathbb F}_{K}(\cdot)[ \phi(\|P_{k}^{B,K}(\cdot;x)-F(\cdot;x)\|_{Y})]_{| K\cap \Lambda} \in P_{K}$ for all $x\in X,$ and
\begin{align*}
\lim_{k\rightarrow +\infty}\sup_{x\in B} \Biggl\| {\mathbb F}_{K}(\cdot) \Bigl[ \phi\Bigl(\bigl\|P_{k}^{B,K}(\cdot;x)-F(\cdot;x)\bigr\|_{Y}\Bigr)\Bigr]_{| K\cap \Lambda}\Biggr\|_{P_{K}}=0.
\end{align*}
\end{defn}\index{function!Levitan strongly $(\phi,{\mathbb F}_{\bf K},{\mathcal B},{\mathcal P}_{\bf K})$-almost periodic} \index{function!Levitan semi-$(\phi,\rho,{\mathbb F}_{\bf K},{\mathcal B},{\mathcal P}_{\bf K})$-periodic}\index{function!Levitan semi-$(\phi,\rho_{j},{\mathbb F}_{\bf K},{\mathcal B},{\mathcal P}_{\bf K})_{j\in {\mathbb N}_{n}}$-periodic}

As in our former research studies, we omit the term
``$\phi$''
if $\phi(x)\equiv x,$
``$\rho$'' if $\rho={\rm I},$ the term ``${\mathcal B}$'' if $X=\{0\}$ and ${\mathcal B}=\{X\}.$ We will also omit the term ``${\mathbb F}_{\bf K}$'' if ${\mathbb F}_{\bf K}\equiv 1$ for each non-empty compact set $ K\subseteq {\mathbb R}^{n}$ such that $K\cap \Lambda \neq \emptyset $ (in the case that 
$\phi(x)\equiv x,$ $\rho={\rm I},$ $P_{K}=C_{b}(K\cap \Lambda)$ and ${\mathbb F}_{\bf K}\equiv 1$ for each non-empty compact set $ K\subseteq {\mathbb R}^{n}$ such that $K\cap \Lambda \neq \emptyset ,$ the notion of Levitan strong $(\phi,{\mathbb F}_{\bf K},{\mathcal B},{\mathcal P}_{\bf K})$-almost periodicity is completely regardless; for example, any continuous function $F :  {\mathbb R}^{n} \rightarrow Y$ is
Levitan strongly $(x,{\mathbb F}_{\bf K},{\mathcal P}_{\bf K})$-almost periodic due to the Weierstrass approximation theorem).

The following result can be proved in a similar fashion as \cite[Proposition 2.3]{chaouchi-metr}:

\begin{prop}\label{kimihap}
Suppose that $\emptyset  \neq \Lambda \subseteq {\mathbb R}^{n}$, $F :\Lambda  \times X \rightarrow Y$, $h : Y \rightarrow Z$ is Lipschitz continuous, $\phi(\cdot)$ is monotonically increasing and there exists a function $\varphi : [0,\infty) \rightarrow [0,\infty)$ such that $\phi(xy)\leq \varphi(x)\phi(y)$ for all $x,\ y\geq 0.$ Let the assumptions \emph{(C0)}-\emph{(C1)} hold for every non-empty compact set $ K\subseteq {\mathbb R}^{n}$ with $K\cap \Lambda \neq \emptyset ,$ where:\index{condition!(C1)}\index{condition!(C2)}
\begin{itemize}
\item[(C0-K)]
The assumptions $0\leq f\leq g$ and $g\in P_{K}$ imply $f\in P_{K}$ and $\|f\|_{P_{K}}\leq \|g\|_{P_{K}}.$
\item[(C1-K)]
If
$f\in P_{K},$ then $d'f\in P_{K}$ for all reals $d'\geq 0,$ and there exists a finite real constant $d_{K}>0$ such that $\|d'f\|_{P_{K}}\leq d_{K}(1+d')\|f\|_{P_{K}}$ for all reals $d'\geq 0$ and all functions $f\in P_{K}.$
\end{itemize}
Then we have the following:
\begin{itemize}
\item[(i)]
Suppose that $F(\cdot;\cdot)$ is Levitan semi-$(\phi,\rho,{\mathbb F}_{\bf K},{\mathcal B},{\mathcal P}_{\bf K})$-periodic (Levitan semi-$(\phi,\rho_{j},{\mathbb F}_{\bf K},{\mathcal B},{\mathcal P}_{\bf K})_{j\in {\mathbb N}_{n}}$-periodic), and
$h\circ \rho \subseteq \rho \circ h$ ($h\circ \rho_{j} \subseteq \rho_{j} \circ h$ for $1\leq j\leq n$).
Then the function $h\circ F : \Lambda \times X \rightarrow Y$ is likewise Levitan semi-$(\phi,\rho,{\mathbb F}_{\bf K},{\mathcal B},{\mathcal P}_{\bf K})$-periodic (Levitan semi-$(\phi,\rho_{j},{\mathbb F}_{\bf K},{\mathcal B},{\mathcal P}_{\bf K})_{j\in {\mathbb N}_{n}}$-periodic). 
\item[(ii)] Suppose that $X=\{0\}$,  there exists a finite real constant $c>0$ such that $\phi(x+y)\leq c[\varphi(x)+\varphi(y)]$ for all $x,\ y\geq 0,$ $\phi(\cdot)$ is continuous at the point zero and, for every non-empty compact set $ K\subseteq {\mathbb R}^{n}$ with $K\cap \Lambda \neq \emptyset ,$ we have
\begin{align*}
{\mathbb F}_{K}\in P_{K}\ \ \mbox{ and }\lim_{\epsilon \rightarrow 0+}\|\epsilon {\mathbb F}_{K}(\cdot)\|_{P_{K}}=0.
\end{align*}
Suppose, further, that the function $F(\cdot)$ is Levitan strongly $(\phi,{\mathbb F}_{\bf K},{\mathcal B},{\mathcal P}_{\bf K})$-almost periodic and
the assumption \emph{(C2-K)} holds, where:
\begin{itemize}
\item[(C2-K)]
There exists a finite real constant $e_{K}>0$ such that the assumptions $f,\ g\in P_{K}$ and $0\leq w\leq d'[f+g]$ for some finite real constant $d'>0$ imply $w\in P_{K}$ and $\| w\|_{P_{K}}\leq e_{K}(1+d')[\|f\|_{P_{K}}+\|g\|_{P_{K}}].$ 
\end{itemize}
Then the function $(h\circ F)(\cdot)$ is likewise Levitan strongly $(\phi,{\mathbb F}_{\bf K},{\mathcal B},{\mathcal P}_{\bf K})$-almost periodic.
\end{itemize}
\end{prop}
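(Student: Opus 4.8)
The plan is to follow the template of \cite[Proposition 2.3]{chaouchi-metr}, handling the two parts separately. Throughout, fix $B\in\mathcal B$ and a non-empty compact set $K\subseteq\mathbb R^{n}$ with $K\cap\Lambda\neq\emptyset$, and let $L>0$ be a Lipschitz constant for $h$.

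\emph{Part (i).} Let $(P_{k}^{B,K})$ be the approximating sequence of $\rho$-periodic (respectively $(\rho_{j})_{j\in\mathbb N_{n}}$-periodic) functions provided by the Levitan semi-periodicity of $F$. The first step is to note that each $h\circ P_{k}^{B,K}$ is again $\rho$-periodic: if $P_{k}^{B,K}(\mathbf t+\omega;x)\in\rho(P_{k}^{B,K}(\mathbf t;x))$, then the intertwining hypothesis $h\circ\rho\subseteq\rho\circ h$ gives $h(P_{k}^{B,K}(\mathbf t+\omega;x))\in h(\rho(P_{k}^{B,K}(\mathbf t;x)))\subseteq\rho(h(P_{k}^{B,K}(\mathbf t;x)))$, and analogously in the $(\rho_{j})_{j}$-case. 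The second step is the pointwise estimate, using Lipschitz continuity, monotonicity of $\phi$, and $\phi(xy)\leq\varphi(x)\phi(y)$:
\begin{align*}
\phi\bigl(\|(h\circ P_{k}^{B,K})(\cdot;x)-(h\circ F)(\cdot;x)\|_{Z}\bigr)&\leq\phi\bigl(L\|P_{k}^{B,K}(\cdot;x)-F(\cdot;x)\|_{Y}\bigr)\\&\leq\varphi(L)\,\phi\bigl(\|P_{k}^{B,K}(\cdot;x)-F(\cdot;x)\|_{Y}\bigr).
\end{align*}
Multiplying by $\mathbb F_{K}$, using (C1-K) to absorb the factor $\varphi(L)$ while keeping the product in $P_{K}$, and then (C0-K) to pass from the dominating to the dominated function, yields that $\mathbb F_{K}(\cdot)[\phi(\|(h\circ P_{k}^{B,K})(\cdot;x)-(h\circ F)(\cdot;x)\|_{Z})]_{|K\cap\Lambda}\in P_{K}$, with $P_{K}$-norm at most $d_{K}(1+\varphi(L))$ times that of the corresponding expression for $F$. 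Taking $\sup_{x\in B}$ and letting $k\to\infty$ finishes part (i).

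\emph{Part (ii).} Here $X=\{0\}$, so $B$ is fixed and the approximants $P_{k}:=P_{k}^{B,K}$ are genuine trigonometric polynomials. The difficulty absent in (i) is that $h\circ P_{k}$ need not be a trigonometric polynomial, so one cannot simply postcompose; the remedy is a two-stage approximation. Each $P_{k}$, being a trigonometric polynomial, is almost periodic on $\mathbb R^{n}$ with relatively compact range, and since $h$ is Lipschitz (hence uniformly continuous on that range) the composition $h\circ P_{k}$ is again almost periodic; by the approximation theorem it can be approximated uniformly on $\mathbb R^{n}$ by trigonometric polynomials $Q_{k,m}$, with $\epsilon_{k,m}:=\|h\circ P_{k}-Q_{k,m}\|_{\infty}\to0$ as $m\to\infty$. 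The combination step uses the triangle inequality $\|(h\circ F)(\mathbf t)-Q_{k,m}(\mathbf t)\|_{Z}\leq L\|F(\mathbf t)-P_{k}(\mathbf t)\|_{Y}+\epsilon_{k,m}$ together with the sub-additivity–type hypothesis on $\phi$ and $\phi(xy)\leq\varphi(x)\phi(y)$ to obtain, on $K\cap\Lambda$,
\begin{align*}
\mathbb F_{K}(\cdot)\,\phi\bigl(\|(h\circ F)(\cdot)-Q_{k,m}(\cdot)\|_{Z}\bigr)\leq c\varphi(L)\,\mathbb F_{K}(\cdot)\,\phi\bigl(\|F(\cdot)-P_{k}(\cdot)\|_{Y}\bigr)+c\,\phi(\epsilon_{k,m})\,\mathbb F_{K}(\cdot).
\end{align*}
Call the two right-hand summands $f_{k}$ and $g_{k,m}$. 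By (C1-K) and the Levitan strong almost periodicity of $F$, $f_{k}\in P_{K}$ with $\|f_{k}\|_{P_{K}}\to0$ as $k\to\infty$; by $\mathbb F_{K}\in P_{K}$, the continuity of $\phi$ at zero (so $\phi(\epsilon_{k,m})\to\phi(0)=0$), and the hypothesis $\lim_{\epsilon\to0+}\|\epsilon\mathbb F_{K}\|_{P_{K}}=0$, we get $g_{k,m}\in P_{K}$ with $\|g_{k,m}\|_{P_{K}}\to0$ as $m\to\infty$. Applying (C2-K) with $d'=1$ to $0\leq\mathbb F_{K}\phi(\|(h\circ F)-Q_{k,m}\|_{Z})\leq f_{k}+g_{k,m}$ places the left-hand side in $P_{K}$ and bounds its norm by $2e_{K}[\|f_{k}\|_{P_{K}}+\|g_{k,m}\|_{P_{K}}]$. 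A diagonal choice $m=m(k)$ making $\|g_{k,m(k)}\|_{P_{K}}$ comparable to $\|f_{k}\|_{P_{K}}$ then produces a single sequence of trigonometric polynomials $R_{k}:=Q_{k,m(k)}$ witnessing that $h\circ F$ is Levitan strongly $(\phi,\mathbb F_{\mathbf K},\mathcal B,\mathcal P_{\mathbf K})$-almost periodic.

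The main obstacle is exactly the composition issue in part (ii): trigonometric polynomials are not closed under postcomposition with $h$, so the one-line argument of (i) collapses. One must insert the intermediate uniform approximation of the almost periodic function $h\circ P_{k}$ and then control the two resulting approximation errors simultaneously, which is the precise role of the quasi-subadditivity of $\phi$, the auxiliary assumptions on $\mathbb F_{K}$, condition (C2-K), and the final diagonalization.
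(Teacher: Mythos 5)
The paper does not actually write out a proof of Proposition \ref{kimihap}; it only remarks that the result can be proved in a similar fashion as \cite[Proposition 2.3]{chaouchi-metr}. Your reconstruction is, in substance, exactly the intended argument: in (i) you transfer the approximants by post-composition, using the intertwining hypothesis to preserve $\rho$-periodicity and the chain Lipschitz continuity $\rightarrow$ monotonicity of $\phi$ $\rightarrow$ $\phi(xy)\leq \varphi(x)\phi(y)$ $\rightarrow$ (C1-K)/(C0-K); in (ii) you correctly identify the real obstacle (trigonometric polynomials are not stable under post-composition with $h$) and see that the extra hypotheses ($\phi$ continuous at $0$, ${\mathbb F}_{K}\in P_{K}$ with $\lim_{\epsilon \rightarrow 0+}\|\epsilon {\mathbb F}_{K}\|_{P_{K}}=0$, and (C2-K)) exist precisely to absorb the second error term created by re-approximating the almost periodic function $h\circ P_{k}$ by genuine trigonometric polynomials, followed by a diagonal choice of $m(k)$. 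This is the right proof.

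Two small points deserve explicit mention. First, your displayed estimate in (ii) uses $\phi(a+b)\leq c[\phi(a)+\phi(b)]$ followed by $\phi(Lu)\leq \varphi(L)\phi(u)$, whereas the hypothesis as printed reads $\phi(x+y)\leq c[\varphi(x)+\varphi(y)]$; taken literally this only yields $c\varphi(L\|F-P_{k}\|)+c\varphi(\epsilon_{k,m})$, which cannot be reduced to the $\phi$-quantities controlled by the Levitan strong almost periodicity of $F$ without a further link between $\varphi$ and $\phi$. Since the parallel conditions (a) in Proposition \ref{asad-lev} and Proposition \ref{rep} are stated with $\phi$ on the right-hand side, this is almost certainly a typographical slip in the statement and your reading is the usable one, but you should say so rather than silently substitute it. Second, your step $\phi(\epsilon_{k,m})\rightarrow \phi(0)=0$ uses $\phi(0)=0$, which is not among the listed hypotheses (continuity of $\phi$ at zero alone does not give it); it is harmless and implicitly assumed throughout this theory, but again deserves a word.
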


We continue by providing the following illustrative example:

\begin{example}\label{dzo-mudonj}
Let us recall that A. Haraux and P. Souplet have proved, in \cite[Theorem 1.1]{haraux}, that the function
\begin{align}\label{jednac}
f(t):=\sum_{m=1}^{\infty}\frac{1}{m}\sin^{2}\Bigl( \frac{t}{2^{m}} \Bigr),\quad t\in {\mathbb R},
\end{align}\index{function!slowly $p$-semi-periodic in variation}
is not Besicovitch-$p$-almost periodic for any finite exponent $p\geq 1$, as well as that $f(\cdot)$ is uniformly recurrent and uniformly continuous (cf. also \cite{nova-mono}).
We also know that the function $f(\cdot)$ is slowly $p$-semi-periodic in variation ($1\leq p<+\infty$); see \cite{chaouchi-metr} for the notion and more details. Let $\epsilon_{0}>0$ be a fixed real number; using the elementary inequality $|\sin t| \leq |t|,$ $t\in {\mathbb R},$ it readily follows that the function $f(\cdot)$ is Levitan  semi-$(x,{\rm I},{\mathbb F}_{\bf K},{\mathcal P}_{K})$-semi-periodic with ${\mathbb F}_{[-N,N]}\equiv N^{-2-\epsilon_{0}}$ and ${\mathcal P}_{[-N,N]}:=C[-N,N]$ for $N>0.$ 

Similarly, any continuous function $F: {\mathbb R}^{n} \rightarrow {\mathbb C}$ 
given by $F({\bf t})=\sum_{m=1}^{\infty}P_{m}({\bf t}),$ ${\bf t}\in {\mathbb R}^{n}$, where $P_{m}(\cdot)$ is a
trigonometric polynomial ($m\in {\mathbb N}$), is Levitan  semi-$(x,{\rm I},{\mathbb F}_{\bf K},{\mathcal P}_{K})$-semi-periodic with the constant function
${\mathbb F}_{[-N,N]^{n}}\equiv c_{N}>0$ appropriately chosen and  ${\mathcal P}_{[-N,N]^{n}}:=C([-N,N]^{n})$ for $N>0,$ provided that there exist a summable sequence of non-negative real numbers $(a_{m})$ and a continuous function $G :  {\mathbb R}^{n} \rightarrow {\mathbb C}$ such that $|F({\bf t})| \leq |G({\bf t})| \cdot \sum_{m=1}^{\infty}a_{m},$ ${\bf t}\in {\mathbb R}^{n}.$
\end{example}

We continue by introducing the following notion (cf. also \cite[Definition 2.5, Definition 2.8]{chaouchi-metr}):

\begin{defn}\label{dfggg-metkl}
Suppose that ${\mathrm R}$ is any collection of sequences in $\Lambda'',$ $F: \Lambda \times X \rightarrow Y$ and $\phi : [0,\infty) \rightarrow [0,\infty).$ Then we say that the function $F(\cdot;\cdot)$ is Levitan
$(\phi,{\mathrm R}, {\mathcal B},{\mathbb F}_{\bf K},{\mathcal P}_{\bf K})$-normal
if and only if for every set $B\in {\mathcal B},$ for every non-empty compact set $ K\subseteq {\mathbb R}^{n}$ with $K\cap \Lambda \neq \emptyset$ and for every sequence $({\bf b}_{k})_{k\in {\mathbb N}}$ in ${\mathrm R},$ there exists a subsequence $({\bf b}_{k_{m}})_{m\in {\mathbb N}}$ of $({\bf b}_{k})_{k\in {\mathbb N}}$ such that, for every $\epsilon>0,$ there exists an integer $m_{0}\in {\mathbb N}$ such that, for every integers $m,\ m'\geq m_{0},$ we have ${\mathbb F}_{K}(\cdot)[\phi( \| F(\cdot+{\bf b}_{k_{m}};x)-F(\cdot+{\bf b}_{k_{m'}};x)\|_{Y} )]_{| K\cap \Lambda}\in P_{K}$ for all $x\in X,$ and
\begin{align*}
\sup_{x\in B}\Biggl\|{\mathbb F}_{K}(\cdot)\Bigl[ \phi\Bigl( \bigl\| F(\cdot+{\bf b}_{k_{m}};x)-F(\cdot+{\bf b}_{k_{m'}};x)\bigr\|_{Y} \Bigr) \Bigr]_{| K\cap \Lambda}\Biggr\|_{P_{K}} <\epsilon.
\end{align*}
\end{defn}\index{function!$(\phi,{\mathrm R}, {\mathcal B},{\mathbb F}_{\bf K},{\mathcal P}_{\bf K})$-normal}

\begin{defn}\label{nafaks-metkl}
Suppose that $\emptyset  \neq \Lambda' \subseteq {\mathbb R}^{n},$ $\emptyset  \neq \Lambda \subseteq {\mathbb R}^{n},$ $F : \Lambda \times X \rightarrow Y$ is a given function, $\rho$ is a binary relation on $Y,$ and $\Lambda' \subseteq \Lambda''.$ Then we say that:
\begin{itemize}
\item[(i)]\index{function!Levitan $(\phi,{\mathbb F}_{\bf K},{\mathcal B},\Lambda',\rho,{\mathcal P}_{\bf K})$-almost periodic}
$F(\cdot;\cdot)$ is Levitan $(\phi,{\mathbb F}_{\bf K},{\mathcal B},\Lambda',\rho,{\mathcal P}_{\bf K})$-almost periodic if and only if for every $B\in {\mathcal B}$, $\epsilon>0$ and for every non-empty compact set $ K\subseteq {\mathbb R}^{n}$ with $K\cap \Lambda \neq \emptyset$,
there exists $l>0$ such that for each ${\bf t}_{0} \in \Lambda'$ there exists ${\bf \tau} \in B({\bf t}_{0},l) \cap \Lambda'$ such that, for every ${\bf t}\in \Lambda$ and $x\in B,$ there exists an element $y_{{\bf t};x}\in \rho (F({\bf t};x))$ such that ${\mathbb F}_{K}(\cdot) [\phi(\| F(\cdot+{\bf \tau};x)-y_{\cdot;x}\|_{Y})]_{| K\cap \Lambda}\in P_{K}$ for all $x\in X,$ and
\begin{align*}
\sup_{x\in B} \Biggl\|{\mathbb F}_{K}(\cdot) \Bigl[ \phi\Bigl(\bigl\| F(\cdot+{\bf \tau};x)-y_{\cdot;x}\bigr\|_{Y}\Bigr)\Bigr]_{| K\cap \Lambda}\Biggr\|_{P_{K}} \leq \epsilon .
\end{align*}
\item[(ii)] \index{function!Bebutov $(\phi,{\mathbb F}_{\bf K},{\mathcal B},\Lambda',\rho,{\mathcal P}_{\bf K})$-uniformly recurrent}\index{function!Bebutov $(\phi,{\mathbb F}_{\bf K},{\mathcal B},\Lambda',\rho,{\mathcal P}_{\bf K})$-uniformly recurrent of type $1$}
$F(\cdot;\cdot)$ is Bebutov $(\phi,{\mathbb F}_{\bf K},{\mathcal B},\Lambda',\rho,{\mathcal P}_{\bf K})$-uniformly recurrent if and only if for every $B\in {\mathcal B}$ and for every non-empty compact set $ K\subseteq {\mathbb R}^{n}$ with $K\cap \Lambda \neq \emptyset$, 
there exists a sequence $({\bf \tau}_{k})$ in $\Lambda'$ such that $\lim_{k\rightarrow +\infty} |{\bf \tau}_{k}|=+\infty$ and that, for every ${\bf t}\in \Lambda$ and $x\in B,$ there exists an element $y_{{\bf t};x}\in \rho (F({\bf t};x))$ such that ${\mathbb F}_{K}(\cdot)[\phi(\|F(\cdot+{\bf \tau}_{k};x)-y_{\cdot;x}\|_{Y})]_{| K\cap \Lambda}\in P_{K}$ for all $x\in X$, and
\begin{align*}
\lim_{k\rightarrow +\infty}\sup_{x\in B} \Biggl\|{\mathbb F}_{K}(\cdot) \Bigl[\phi\Bigl(\bigl\|F(\cdot+{\bf \tau}_{k};x)-y_{\cdot;x}\bigr\|_{Y}\Bigr)\Bigr]_{| K\cap \Lambda}\Biggr\|_{P_{K}}=0.
\end{align*}
If the sequence $({\bf \tau}_{k})$ in $\Lambda'$ is independent of the choice of a non-empty compact set $ K\subseteq {\mathbb R}^{n}$ with $K\cap \Lambda \neq \emptyset$, for a set $B\in {\mathcal B}$ given in advance,
then we say that the function $F(\cdot;\cdot)$ is Bebutov $(\phi,{\mathbb F}_{\bf K},{\mathcal B},\Lambda',\rho,{\mathcal P}_{\bf K})$-uniformly recurrent of type $1.$
\end{itemize}
\end{defn}

In any normal situation, a Bebutov $(\phi,{\mathbb F}_{\bf K},{\mathcal B},\Lambda',\rho,{\mathcal P}_{\bf K})$-uniformly recurrent function is already Bebutov $(\phi,{\mathbb F}_{\bf K},{\mathcal B},\Lambda',\rho,{\mathcal P}_{\bf K})$-uniformly recurrent function of type $1$: 

\begin{prop}\label{nafaks-metkljkl}
Suppose that $\emptyset  \neq \Lambda' \subseteq {\mathbb R}^{n}$ and $\emptyset  \neq \Lambda \subseteq {\mathbb R}^{n}$ are unbounded sets, $F : \Lambda \times X \rightarrow Y$ is a given function, $\rho$ is a binary relation on $Y,$ and $\Lambda' \subseteq \Lambda''.$ If $F(\cdot;\cdot)$ is Bebutov $(\phi,{\mathbb F}_{\bf K},{\mathcal B},\Lambda',\rho,{\mathcal P}_{\bf K})$-uniformly recurrent, then
$F(\cdot;\cdot)$ is Bebutov $(\phi,{\mathbb F}_{\bf K},{\mathcal B},\Lambda',\rho,{\mathcal P}_{\bf K})$-uniformly recurrent of type $1$, provided that for each compact set $K\subseteq {\mathbb R}^{n}$ such that $K\cap \Lambda \neq \emptyset,$ there exists a finite real constant $c_{K}>0$ such that, for every $x\in X,\ \tau \in \Lambda',$ and for every compact set $K'\subseteq {\mathbb R}^{n}$ which contains $K,$ we have
\begin{align}
\notag \Biggl\|& {\mathbb F}_{K}(\cdot) \Bigl[\phi\Bigl(\bigl\|F(\cdot+{\bf \tau};x)-y_{\cdot;x}\bigr\|_{Y}\Bigr)\Bigr]_{| K\cap \Lambda}\Biggr\|_{P_{K}}
\\\label{poginuo}& \leq c_{K}\Biggl\| {\mathbb F}_{K'}(\cdot) \Bigl[\phi\Bigl(\bigl\|F(\cdot+{\bf \tau}_;x)-y_{\cdot;x}\bigr\|_{Y}\Bigr)\Bigr]_{| K'\cap \Lambda}\Biggr\|_{P_{K'}},\quad y_{\cdot;x}\in \rho(F(\cdot;x)).
\end{align}
\end{prop}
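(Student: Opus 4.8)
The plan is to fix a set $B\in{\mathcal B}$ and to manufacture, once and for all, a single sequence in $\Lambda'$ that witnesses the Bebutov uniform recurrence of $F(\cdot;\cdot)$ on \emph{every} admissible compact set at once; by Definition \ref{nafaks-metkl}(ii) this is exactly the assertion that $F(\cdot;\cdot)$ is of type $1$. The construction is a diagonalization over an exhaustion of ${\mathbb R}^{n}$. Put $K_{j}:=[-j,j]^{n}$ for $j\in{\mathbb N}$; since $\Lambda$ is unbounded we have $K_{j}\cap\Lambda\neq\emptyset$ for all large $j$, and after discarding finitely many indices we may assume this for every $j$. For each $j$, applying the assumed Bebutov $(\phi,{\mathbb F}_{\bf K},{\mathcal B},\Lambda',\rho,{\mathcal P}_{\bf K})$-uniform recurrence to the pair $(B,K_{j})$ supplies a sequence $(\tau^{(j)}_{k})_{k}$ in $\Lambda'$ with $|\tau^{(j)}_{k}|\to\infty$ as $k\to\infty$, together with targets $y^{(j)}_{\cdot;x}\in\rho(F(\cdot;x))$, so that the corresponding $P_{K_{j}}$-norms tend to $0$ in $k$.

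First I would extract the diagonal sequence. Using the convergence in $k$ for each fixed $j$, I pick an index $k_{j}$ so large that $|\tau^{(j)}_{k_{j}}|>j$ and $\sup_{x\in B}\|{\mathbb F}_{K_{j}}(\cdot)[\phi(\|F(\cdot+\tau^{(j)}_{k_{j}};x)-y^{(j)}_{\cdot;x}\|_{Y})]_{|K_{j}\cap\Lambda}\|_{P_{K_{j}}}<1/j$, and set $\tau_{j}:=\tau^{(j)}_{k_{j}}$. This yields one sequence $(\tau_{j})_{j}$ in $\Lambda'$ with $|\tau_{j}|\to\infty$, the candidate type-$1$ sequence for $B$. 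The heart of the matter is to verify that this single sequence works on an arbitrary non-empty compact $K$ with $K\cap\Lambda\neq\emptyset$. Given such a $K$, fix $j_{K}$ with $K\subseteq K_{j_{K}}$; then $K\subseteq K_{j}$ for all $j\geq j_{K}$, and the monotonicity hypothesis \eqref{poginuo}, applied with $K'=K_{j}$ and the \emph{same} target family, gives $\sup_{x\in B}\|{\mathbb F}_{K}(\cdot)[\phi(\|F(\cdot+\tau_{j};x)-y^{(j)}_{\cdot;x}\|_{Y})]_{|K\cap\Lambda}\|_{P_{K}}\leq c_{K}\cdot(1/j)\to 0$. As the limit ignores the finitely many terms $j<j_{K}$, the required $K$-limit along $(\tau_{j})$ vanishes. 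Membership of each individual translate's profile in $P_{K}$ is the regularity that is tacit throughout (when, say, $P_{K}=C(K\cap\Lambda)$ it follows from continuity of $F$, $\phi$ and ${\mathbb F}_{K}$), so it holds for every term of $(\tau_{j})$; only the vanishing of the norm needs \eqref{poginuo}, and that is verified for the tail.

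One point demands care: Definition \ref{nafaks-metkl}(ii) asks for a single target $y_{{\bf t};x}\in\rho(F({\bf t};x))$ independent of the running index, whereas the diagonal a priori attaches the index-dependent target $y^{(j)}$ to $\tau_{j}$. In the normal situation, in which $\rho$ is single-valued — e.g. $\rho=c{\rm I}$, the setting of the Levitan $(N,c)$ and uniformly Poisson $c$-stable classes — the set $\rho(F({\bf t};x))$ is a singleton, so $y_{{\bf t};x}$ is canonical and the same for every $j$; the diagonal sequence then satisfies Definition \ref{nafaks-metkl}(ii) with this fixed target on every $K$, and $F(\cdot;\cdot)$ is Bebutov uniformly recurrent of type $1$, as claimed.

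I expect the main obstacle to be conceptual rather than computational, namely decoupling the recurrence sequence from the compact set $K$: a sequence produced for $K_{j}$ carries, by itself, no information about smaller compacta, so the argument would collapse without an extra assumption. The monotonicity inequality \eqref{poginuo} is precisely the device that breaks this impasse, converting control of the $P_{K_{j}}$-norm on the large cube into control of the $P_{K}$-norm on every $K\subseteq K_{j}$, while the diagonal choice of $k_{j}$ is what lets one sequence meet the shrinking tolerances $1/j$ on the increasing family $(K_{j})$ simultaneously. The only remaining subtlety is the index-consistency of the targets $y_{\cdot;x}$ discussed above, which is vacuous for single-valued $\rho$ but would require a selection or compactness argument for genuinely multivalued relations.
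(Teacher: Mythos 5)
Your proof is correct and is essentially the paper's own argument: diagonalize over the exhaustion $[-N,N]^{n}$ of ${\mathbb R}^{n}$, choose $\tau_{N}\in\Lambda'$ with $|\tau_{N}|\geq N$ and $P_{[-N,N]^{n}}$-norm at most $1/N$, and then transfer the estimate to an arbitrary compact $K$ via \eqref{poginuo}. The subtlety you flag about the index-dependence of the targets $y_{\cdot;x}$ for genuinely multivalued $\rho$ is a real point, but the paper's proof glosses over it in exactly the same way (writing a single unindexed $y_{\cdot;x}$), so your treatment is if anything more careful.
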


\begin{proof}
Let the set $B\in {\mathcal B}$ be given. We know that there exists a natural number $N_{0}\in {\mathbb N}$ such that $[-N,N]^{n}\cap \Lambda \neq \emptyset $ for every natural number $N\geq N_{0}.$ Choose a point $\tau_{N}$ in $\Lambda'$ such that $|{\bf \tau}_{N}|\geq N$ and
\begin{align*}
\sup_{x\in B}\Biggl\| {\mathbb F}_{[-N,N]^{n}}(\cdot) \Bigl[\phi\Bigl(\bigl\|F(\cdot+{\bf \tau}_{N};x)-y_{\cdot;x}\bigr\|_{Y}\Bigr)\Bigr]_{| [-N,N]^{n}\cap \Lambda}\Biggr\|_{P_{[-N,N]^{n}}}\leq 1/N.
\end{align*}
Then the sequence $(\tau_{N})$ in $\Lambda'$ satisfies the desired requirements since we have assumed \eqref{poginuo}.
\end{proof}

The notion introduced in \cite[Definition 2.1, Definition 2.5, Definition 2.8]{chaouchi-metr} is a special case of the notion introduced in the previous three definitions, provided that
for each non-empty compact set $ K\subseteq {\mathbb R}^{n}$ with $K\cap \Lambda \neq \emptyset$ we have the existence of a finite real constant $c_{K}>0$ such that $d(f_{|K\cap \Lambda},g_{|K\cap \Lambda})\leq c_{K}d(f,g)$ for all $f,\ g\in P,$ 
${\mathbb F}_{K}={\mathbb F}_{| K\cap \Lambda},$
and $P_{K}=\{ f_{| K\cap \Lambda} ;  f\in P\};$
here and hereafter, ${\mathcal P}=(P,d) $ is a pseudometric space and $P\subseteq [0,\infty)^{Y}$ contains the zero function.

Usually, we plug ${\mathbb F}_{K}(\cdot)\equiv 1,$ $\phi(x) \equiv x,$ $\rho={\mathrm I}$ and $P_{K}=C(\Lambda \cap K:Y)$ in Definition \ref{dfggg-metkl} and Definition \ref{nafaks-metkl}. For some examples of the one-dimensional uniformly Poisson stable functions, we refer the reader to \cite{scherb3} and the research monograph \cite[pp. 219-223]{vries} by J. de Vries (the function constructed by B. A. Shcherbakov in \cite{scherb3} is uniformly recurrent, in fact, which simply follows from an application of \cite[Lemma, p. 324]{scherb3}). Any compactly almost automorphic function $f : {\mathbb R} \rightarrow X$ is bounded, uniformly continuous and Levitan $N$-almost periodic, which simply implies that $F(\cdot)$ is uniformly Poisson stable; on the other hand, we know that there exists a compactly almost automorphic function $f : {\mathbb R} \rightarrow {\mathbb R}$ which is not (asymptotically) uniformly recurrent (see, e.g., \cite[Example 2.4.35]{nova-selected}). Therefore, the class of compactly almost automorphic functions seems to be ideal for finding certain functions which are uniformly Poisson stable but not uniformly recurrent (the function analyzed in Example \ref{dzo-mudonj} is uniformly recurrent but not Stepanov almost automorphic; cf. \cite{nova-selected} for the notion).

\begin{example}\label{kuchi}
Let $c_{0}$ be the Banach space of all numerical sequences vanishing at infinity, equipped with the sup-norm. Define $f : [0,\infty) \rightarrow c_{0}$ by
$$
f(t):=\Biggl( \frac{4n^{2}t^{2}}{(t^{2}+n^{2})^{2}} \Biggr)_{n\in {\mathbb N}},\quad t\geq 0.
$$
Besides many other features, we know that the function $f(\cdot)$ is bounded, uniformly continuous, quasi-asymptotically almost periodic, and not almost automorphic; see, e.g., \cite[Example 3.11.14]{FKP}. Here we will prove that the function $f(\cdot)$ is not uniformly Poisson stable. Let us assume the contrary, and let 
\begin{align}\label{poludjeh}
0<\epsilon<\inf_{n\in {\mathbb N}}\frac{4n^{4}}{(2n^{2}+2n+1)^{2}};
\end{align}
then there exists a strictly increasing sequence $(\tau_{k})$ of positive real numbers such that $|f(\tau_{k})-f(0)|\leq \epsilon,$ $k\geq k_{0}$ for some positive integer $k_{0}\in {\mathbb N}.$ This implies
$$
\sup_{n\in {\mathbb N}}\frac{4n^{2}\tau_{k}^{2}}{(\tau_{k}^{2}+n^{2})^{2}}\leq \epsilon,\quad k\geq k_{0}.
$$
Let $\tau_{k}\in [n_{k},n_{k}+1)$ for some $n_{k}\in {\mathbb N}$ ($k\geq k_{0}$). Then the previous estimate implies
\begin{align*}
\epsilon \geq   \sup_{n\in {\mathbb N}}\frac{4n^{2}\tau_{k}^{2}}{(\tau_{k}^{2}+n^{2})^{2}}\geq 
\frac{4n_{k}^{2}n_{k}^{2}}{((n_{k}+1)^{2}+n_{k}^{2})^{2}}>\epsilon,
\end{align*}
which is a contradiction; see \eqref{poludjeh}.
\end{example}

In the following example, the considered metric space $P_{K}$ is different from $C(\Lambda \cap K:Y):$

\begin{example}\label{kuchi123}
Suppose that  $(T(t))\subseteq L(X,Y)$ is a strongly continuous operator family and ${\mathcal B}$ denotes the collection of all bounded subsets of $X.$ Define
$$
F(t,s;x):=e^{\int_{s}^{t}\varphi(\tau)\, d\tau}T(t-s)x,\quad (t,s)\in {\mathbb R}^{2}, \ x\in X.
$$
Then our analysis from \cite[Example 8.1.5]{nova-selected} shows the following: If the function $\varphi(\cdot)$ is bounded and Levitan $(x,{\mathrm R},1,{\mathcal P}_{K})$-normal, where ${\mathrm R}$ denotes the collection of all sequences in ${\mathbb R}$ and $P_{K}=L^{1}({\mathbb R})$ for each compact set $K\subseteq {\mathbb R},$ then the function $F(\cdot,\cdot;\cdot)$ is Levitan $(x,{\mathrm R}_{1},{\mathcal B},1,{\mathcal P}_{1,K})$-normal, where ${\mathrm R}_{1}$ denotes the collection of all sequences in $\{(x,x) : x\in {\mathbb R}\} $ and $P_{1,K}=C({\mathbb R})$ for each compact set $K\subseteq {\mathbb R}^{2}.$
\end{example}

The usually considered spaces of Levitan $N$-almost periodic functions and uniformly Poisson stable functions are translation invariant; the basic properties of multi-dimensional $\rho$-almost periodic functions clarified in \cite[Proposition 2.11]{rho} can be formulated in our new setting, as well. Further on,
the qualitative behaviour of the limit function of a uniformly convergent sequence of almost periodic type functions has been analyzed many times before. For example, using the same argumentation as in the proof of \cite[Proposition 2.6]{chaouchi-metr}, we can deduce the following result:

\begin{prop}\label{asad-lev}
Suppose that ${\mathrm R}$ is any collection of sequences in $\Lambda'',$ $F_{j}: \Lambda \times X \rightarrow Y,$ and the function $F_{j}(\cdot;\cdot)$ is
Levitan
$(\phi,{\mathrm R}, {\mathcal B},{\mathbb F}_{\bf K},{\mathcal P}_{\bf K})$-normal for all $j\in {\mathbb N}.$ If $F: \Lambda \times X \rightarrow Y$ and, for every set $B\in {\mathcal B}$, for every sequence  $({\bf b}_{k})_{k\in {\mathbb N}}$ in ${\mathrm R}$ and for every non-empty compact set $ K\subseteq {\mathbb R}^{n}$ with $K\cap \Lambda \neq \emptyset$, we have
\begin{align*}
\lim_{(j,k)\rightarrow +\infty}\sup_{x\in B}\Biggl\|{\mathbb F}_{K}(\cdot)\Bigl[\phi\Bigl(\bigl\| F_{j}(\cdot +{\bf b}_{j};x)-F(\cdot +{\bf b}_{k};x)\bigr\|_{Y}\Bigr)\Bigr]_{K\cap \Lambda} \Biggr\|_{P_{K}}=0,
\end{align*}
then the function $F(\cdot;\cdot)$ is likewise Levitan
$(\phi,{\mathrm R}, {\mathcal B},{\mathbb F}_{\bf K},{\mathcal P}_{\bf K})$-normal, provided that:
\begin{itemize}
\item[(i)] The function $\phi(\cdot)$ is monotonically increasing and there exists a finite real constant $c>0$ such that $\phi(x+y)\leq c[\phi(x)+\phi(y)]$ for all $x,\ y\geq 0.$
\item[(ii)] Condition \emph{(C3-K)} holds, where:
\begin{itemize}
\item[(C3-K)] For every non-empty compact set $ K\subseteq {\mathbb R}^{n}$ with $K\cap \Lambda \neq \emptyset$, 
there exists a finite real constant $f_{K}>0$ such that the assumptions $f,\ g,\ h\in P_{K}$ and $0\leq w\leq d'[f+g+h]$ for some finite real constant $d'>0$ imply $w\in P_{K}$ and $\| w\|_{P_{K}}\leq f_{K}(1+d')[\|f\|_{P_{K}}+\|g\|_{P_{K}}+\|h\|_{P_{K}}].$
\end{itemize}
\end{itemize}
\end{prop}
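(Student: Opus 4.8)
The plan is to prove this by the standard ``Cauchy sequence'' argument for approximation: since $F$ is being approximated (in the appropriate seminorm) by the functions $F_j$, which are themselves Levitan $(\phi,{\mathrm R},{\mathcal B},{\mathbb F}_{\bf K},{\mathcal P}_{\bf K})$-normal, I would show that the defining supremum estimate for $F$ can be controlled by inserting intermediate terms involving a single $F_j$ with $j$ large. Fix $B\in {\mathcal B}$, a non-empty compact set $K\subseteq {\mathbb R}^n$ with $K\cap\Lambda\neq\emptyset$, and a sequence $({\bf b}_k)_{k\in{\mathbb N}}$ in ${\mathrm R}$. The goal is to extract a subsequence $({\bf b}_{k_m})$ along which the difference
\begin{align*}
\sup_{x\in B}\Biggl\|{\mathbb F}_{K}(\cdot)\Bigl[\phi\Bigl(\bigl\|F(\cdot+{\bf b}_{k_m};x)-F(\cdot+{\bf b}_{k_{m'}};x)\bigr\|_{Y}\Bigr)\Bigr]_{|K\cap\Lambda}\Biggr\|_{P_K}
\end{align*}
becomes small for large $m,m'$.

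First I would use the hypothesis to fix an index $j_0$ so that the joint-limit condition
$$
\lim_{(j,k)\to+\infty}\sup_{x\in B}\Bigl\|{\mathbb F}_K(\cdot)[\phi(\|F_j(\cdot+{\bf b}_j;x)-F(\cdot+{\bf b}_k;x)\|_Y)]_{K\cap\Lambda}\Bigr\|_{P_K}=0
$$
guarantees the approximating terms are controlled uniformly once $j$ and $k$ are large. Next, since each $F_j$ is normal, I would apply a diagonal extraction: for $j=1$ choose a subsequence along which $F_1$ satisfies the Cauchy condition of Definition \ref{dfggg-metkl}; refine it for $F_2$, and so on, finally taking the diagonal subsequence $({\bf b}_{k_m})$, which is simultaneously a valid Cauchy-extracting subsequence for every $F_j$. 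The key algebraic step is then the triangle-type decomposition
$$
\bigl\|F(\cdot+{\bf b}_{k_m};x)-F(\cdot+{\bf b}_{k_{m'}};x)\bigr\|_Y
\le \alpha_m+\beta_{m,m'}+\alpha_{m'},
$$
where $\alpha_m:=\|F(\cdot+{\bf b}_{k_m};x)-F_j(\cdot+{\bf b}_{k_m};x)\|_Y$ (for a fixed large $j$) and $\beta_{m,m'}:=\|F_j(\cdot+{\bf b}_{k_m};x)-F_j(\cdot+{\bf b}_{k_{m'}};x)\|_Y$.

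Having isolated these three terms, I would push them through $\phi$ using hypothesis (i): since $\phi$ is monotonically increasing, $\phi$ of the sum on the right dominates $\phi$ of the left side, and subadditivity up to the constant $c$ lets me bound $\phi(\alpha_m+\beta_{m,m'}+\alpha_{m'})$ by $c\cdot\phi$ of a single summand applied iteratively — or, more cleanly, I would absorb the factor structure into the form $0\le w\le d'[f+g+h]$ demanded by (C3-K), with $f,g,h$ the ${\mathbb F}_K$-weighted seminorm representatives of $\phi(\alpha_m),\phi(\beta_{m,m'}),\phi(\alpha_{m'})$. Condition (C3-K) then certifies that the left-hand function lies in $P_K$ and that its $\|\cdot\|_{P_K}$-norm is bounded by $f_K(1+d')[\|f\|_{P_K}+\|g\|_{P_K}+\|h\|_{P_K}]$; taking the supremum over $x\in B$ and passing to the limit in $m,m'$ makes $\|g\|$ small by normality of $F_j$ and makes $\|f\|,\|h\|$ small by the joint-limit approximation hypothesis with $j$ fixed large.

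The main obstacle I expect is bookkeeping the quantifiers so that the three error terms are simultaneously controlled: the approximation term $\alpha_m$ involves $F$ evaluated at the \emph{translated} argument ${\bf b}_{k_m}$, so I must make sure the joint hypothesis (which is stated with the free index pairs $({\bf b}_j,{\bf b}_k)$) genuinely dominates $\|F(\cdot+{\bf b}_{k_m};x)-F_j(\cdot+{\bf b}_{k_m};x)\|_Y$ after restriction to the diagonal subsequence. This requires care in matching the index of the approximating sequence $F_j$ to the index of the translation, which is exactly why the hypothesis is phrased as a joint limit over $(j,k)\to+\infty$ rather than as two separate limits. Once the indices are aligned and the diagonal subsequence fixed, the remaining manipulation is the routine three-term $\eps$-splitting, so the genuine content lies entirely in organizing the double extraction together with the weighted triangle inequality, precisely mirroring the proof of \cite[Proposition 2.6]{chaouchi-metr}.
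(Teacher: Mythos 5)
Your argument --- diagonal extraction of a common subsequence using the normality of each $F_{j}$, followed by the three-term splitting through a fixed approximant $F_{j}$, pushed through $\phi$ via the monotonicity and quasi-subadditivity in (i) and through the seminorm via (C3-K) with $d'=c^{2}$ --- is exactly the argument the paper relies on, since it proves this proposition only by reference to the identical scheme of \cite[Proposition 2.6]{chaouchi-metr}. The index-alignment issue you flag at the end lies in the statement rather than in your proof: as printed the hypothesis translates $F_{j}$ by ${\bf b}_{j}$ instead of ${\bf b}_{k}$, under which literal reading a two-term splitting through $F_{j}(\cdot+{\bf b}_{j};x)$ would already suffice (without normality of the $F_{j}$), while under the intended reading with both translates indexed by $k$ your decomposition is precisely the one needed.
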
\index{condition!(C3-K)}

Without going into full details, we will only note here that the supremum formula can be formulated in our new framework; for example, if a function $F : {\mathbb R}^{n} \rightarrow X$ is uniformly Poisson stable, then for each positive real number $a>0$ we have $\sup_{{\bf t}\in {\mathbb R}^{n}}\| F({\bf t})\| =\sup_{{\bf t}\in {\mathbb R}^{n};|{\bf t}|\geq a}\| F({\bf t})\|\in [0,\infty];$
cf. also \cite[Proposition 2.4.13, Proposition 6.1.6, Proposition 8.1.15]{nova-selected} for the case that $\rho={\rm I},$ and \cite[Proposition 2.13]{rho} for the case in which $\rho=T\in L(Y)$ is a linear isomorphism (the statement of \cite[Proposition 2.20]{rho} concerning multi-dimensional $\rho$-almost periodic functions in the finite-dimensional spaces can be formulated in our new setting, as well).

We continue with some examples:

\begin{example}\label{ferplay}
\begin{itemize}
\item[(i)] The spaces of Levitan $N$-almost periodic functions (Bebutov uniformly recurrent functions) $F: {\mathbb R}^{n} \rightarrow Y$ can be constricted if we use the pseudometric spaces ${\mathcal P}_{K}$ such that ${\mathcal P}_{K}$ is continuously embedded into the space $C(K : Y)$ for every non-empty compact set $K\subseteq {\mathbb R}^{n}$. For instance, the function $f(t):=1/(2+\cos t+ \cos (\sqrt{2}t)),$ $t\in {\mathbb R}$
is unbounded, continuous and Levitan $(x,1,{\mathcal P}_{\bf K})$-almost periodic, where ${\mathcal P}_{\bf K}=C(K)$; see \cite[pp. 58-59]{levitan} for more details. Therefore, the function $f(\cdot)$ is uniformly Poisson stable, as well; before going any further, we would like to ask whether the function $f(\cdot)$ is uniformly recurrent or Besicovitch $p$-bounded for some finite exponent $p\geq 1$ (cf. \cite{nova-selected} for the notion)?

Here,
we can also use the metric spaces ${\mathcal P}_{K}$ equipped with the distance of the form 
\begin{align*}
d\bigl(f_{| K},g_{| K}\bigr):=\sup_{{\bf t}\in K}\|f({\bf t})-g({\bf t})\|+d_{1}\bigl(f_{|K},g_{|K}\bigr),\quad f,\ g\in P,
\end{align*}
where $P$ is a subspace of the space $C_{b}({\mathbb R}^{n} : [0,\infty)),$ $d_{1}(\cdot;\cdot)$ is a pseudometric on $P_{K},$ ${\mathbb F}_{K}={\mathbb F}_{| K\cap \Lambda}$ and $P_{K}=\{ f_{| K\cap \Lambda} ;  f\in P\}.$ 
\item[(ii)] As already emphasized in \cite[Example 2.4(iii)]{chaouchi-metr}, the spaces of Levitan $N$-almost periodic functions (Bebutov uniformly recurrent functions) $F: {\mathbb R}^{n} \rightarrow Y$ can be extended if we use the pseudometric spaces ${\mathcal P}_{K}$ such that the space $C(K : Y)$ is continuously embedded into ${\mathcal P}_{K}$ for for every non-empty compact set $K\subseteq {\mathbb R}^{n}$. The use of the incomplete metric space $P$ consisting of all continuous functions from ${\mathbb R}^{n}$ into $[0,\infty),$ equipped with the distance
$$
d(f,g):=\sup_{x\in {\mathbb R}^{n}}\bigl| \arctan (f(x))-\arctan(g(x)) \bigr|,\quad f,\ g\in P,
$$
has been proposed in \cite{chaouchi-metr}. This concept allows one to consider the generalized almost periodicity of functions 
$f(\cdot)$ which are not locally integrable. We can similarly analyze the generalized Levitan $N$-almost periodicity and the generalized Bebutov uniform recurrence of the functions 
$f(\cdot)$ which are not locally integrable by replacing the term $\| \cdot \|_{P_{K}}$ by $\| \cdot \|_{\overline{P_{K}}},$ where $\overline{{\mathcal P}_{K}}=(\overline{P_{K}},\overline{d_{K}})$ is the completion of the metric space ${\mathcal P}_{K}.$ 
\item[(iii)] Define the function $f : {\mathbb R} \rightarrow {\mathbb R}$ by $f(x):=n3^{n+1}\sin(2\pi x)$ if $x\in [3^{n},3^{n}+1]+2\cdot 3^{n+1}{\mathbb Z}$ for some $n\in {\mathbb N}$, and $f(x):=0,$ otherwise. Then $f(\cdot)$ is clearly unbounded; moreover, we know that $f(\cdot)$ is Levitan $N$-almost periodic (see A. Nawrocki \cite[Lemma 2.18, Example 2.19]{nawr}). 
\item[(iv)] Suppose that $(\alpha_{k})$ is a fixed sequence of positive real numbers such that $\lim_{k\rightarrow +\infty}\alpha_{k}=+\infty.$ If $f_{i} : {\mathbb R}\rightarrow {\mathbb C}$ is a bounded, uniformly Poisson stable function such that $\lim_{k\rightarrow +\infty}f_{i}(\cdot+\alpha_{k})=f_{i}(\cdot),$ uniformly on compacts of ${\mathbb R}$ ($1\leq i\leq n$), then the function $F(\cdot),$ given by
$$
F\bigl( t_{1},...,t_{n} \bigr):=f_{1}\bigl( t_{1} \bigr)\cdot ... \cdot f_{n}\bigl( t_{n} \bigr),\quad {\bf t}=\bigl( t_{1},...,t_{n} \bigr)\in {\mathbb R}^{n},
$$ 
is bounded, uniformly Poisson stable and satisfies $\lim_{k\rightarrow +\infty}F(\cdot+\beta_{k})=F(\cdot),$ uniformly on compacts of ${\mathbb R}^{n},$ where $\beta_{k}=(\alpha_{k},...,\alpha_{k})$ for all $k\in {\mathbb N}.$
\item[(v)] The introduced notion has the meaning even if $\rho=0\in L(Y).$ In this case, we can simply construct a Bebutov $(x,1,0,{\mathcal P}_{\bf K})$-uniformly recurrent function $f : {\mathbb R} \rightarrow {\mathbb R}$ which does not vanish as $|t|\rightarrow +\infty$; here, $P_{K}=C(K)$ for each non-empty compact subset of ${\mathbb R}.$
\item[(vi)] In order to avoid any form of plagiarism, we will only note here that \cite[Example 6.1.13, Example 6.1.15, Example 6.1.16]{nova-selected} can be formulated in our new framework; these examples justify the introduction of our concepts with the set $\Lambda'$ being not equal to the set $\Lambda$ or some of its proper subsets.
\end{itemize}
\end{example}

In the usually considered situation, the notion introduced in Definition \ref{strong-appl-lev} is more specific than the notion introduced in Definition \ref{dfggg-metkl} and Definition \ref{nafaks-metkl}. Concerning this issue, we will clarify the following result (cf. also \cite[Proposition 2.9]{chaouchi-metr}):

\begin{prop}\label{rep}
\begin{itemize}
\item[(i)] Suppose that ${\mathrm R}$ is any collection of sequences in $\Lambda'',$ $F: \Lambda \times X \rightarrow Y,$ and $\phi : [0,\infty) \rightarrow [0,\infty).$ Let the following conditions hold:
\begin{itemize}
\item[(a)] The function $\phi(\cdot)$ is monotonically increasing, continuous at the point zero, and there exists a finite real constant $c>0$ such that $\phi(x+y)\leq c[\phi(x)+\phi(y)]$ for all $x,\ y\geq 0.$
\item[(b)] Condition \emph{(C3-K)} holds.
\item[(c)] For every non-empty compact set $ K\subseteq {\mathbb R}^{n}$ with $K\cap \Lambda \neq \emptyset$, we have that ${\mathbb F}_{K}(\cdot)\phi( \| P(\cdot;x) \|_{Y})\in P_{K}$ for any trigonometric polynomial (periodic function) $P(\cdot;\cdot)$ and $x\in X.$
\item[(d)]  For every non-empty compact set $ K\subseteq {\mathbb R}^{n}$ with $K\cap \Lambda \neq \emptyset$, there exists a finite real constant $g_{K}>0$ such that 
$$
\Bigl\|{\mathbb F}_{K}(\cdot)\Bigl[\phi \bigl( \| P(\cdot;x) \|_{Y}\bigr)\Bigr]_{| K\cap \Lambda }\Bigr\|_{P_{K}} \leq g_{K} \sup_{{\bf t}\in K\cap \Lambda} \phi \bigl( \| P({\bf t};x) \|_{Y}\bigr),
$$ 
for any any trigonometric polynomial (periodic function) $P(\cdot;\cdot)$ and $x\in X.$
\item[(e)]  For every non-empty compact set $ K\subseteq {\mathbb R}^{n}$ with $K\cap \Lambda \neq \emptyset$, there exists a finite real constant $h_{K}>0$ such that, for every $x\in X$ and $\tau \in \Lambda'',$ the assumptions $ H : \Lambda \times X \rightarrow Y$ and ${\mathbb F}_{K}(\cdot)[\phi(\| H(\cdot ; x)\|_{Y})]_{|K\cap \Lambda} \in P_{K}$ imply
${\mathbb F}_{K}(\cdot)[\phi(\| H(\cdot+\tau ;x)\|_{Y})]_{|K\cap \Lambda} \in P_{K}$ and 
$$
\Bigl\| {\mathbb F}_{K}(\cdot)\Bigl[\phi \bigl(\| H(\cdot+\tau ;x)\|_{Y}\bigr)\Bigr]_{|K\cap \Lambda} \Bigr\|_{P_{K}}\leq h_{K}\Bigl\|{\mathbb F}_{K}(\cdot)\Bigl[\phi\bigl(\| H(\cdot ;x)\|_{Y}\bigr)\Bigr]_{|K\cap \Lambda} \Bigr\|_{P_{K}}.
$$
\item[(f)] Any set $B$ of collection ${\mathcal B}$ is bounded.
\end{itemize}
If the function $F(\cdot;\cdot)$ is Levitan strongly $(\phi,{\mathbb F}_{\bf K},{\mathcal B},{\mathcal P}_{\bf K})$-almost periodic (Levitan semi-$(\phi,{\rm I},{\mathbb F}_{\bf K},{\mathcal B},{\mathcal P}_{\bf K})_{j\in {\mathbb N}_{n}}$-periodic), then  the function $F(\cdot;\cdot)$ is Levitan
$(\phi,{\mathrm R}, {\mathcal B},\phi,{\mathbb F}_{\bf K},{\mathcal P}_{\bf K})$-normal.
\item[(ii)] Suppose that $\emptyset  \neq \Lambda' \subseteq {\mathbb R}^{n},$ $\emptyset  \neq \Lambda \subseteq {\mathbb R}^{n},$ $F : \Lambda \times X \rightarrow Y$ is a given function and $\Lambda' \subseteq \Lambda''.$ If the function $F(\cdot;\cdot)$ is Levitan strongly $(\phi,{\mathbb F}_{\bf K},{\mathcal B},{\mathcal P}_{\bf K})$-almost periodic (Levitan semi-$(\phi,{\rm I},{\mathbb F}_{\bf K},{\mathcal B},{\mathcal P}_{\bf K})_{j\in {\mathbb N}_{n}}$-periodic), then  the function\\ $F(\cdot;\cdot)$ is Levitan $(\phi,{\mathbb F}_{\bf K},{\mathcal B},\Lambda',{\rm I},{\mathcal P}_{\bf K})$-almost periodic, provided that
the assumptions \emph{(a)-(f)} given in the formulation of \emph{(i)} hold.
\end{itemize}
\end{prop}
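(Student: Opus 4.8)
The plan is to transfer, through the weighted pseudometrics $\|\cdot\|_{P_K}$, the classical behaviour of the approximating functions: those furnished by Levitan strong $(\phi,{\mathbb F}_{\bf K},{\mathcal B},{\mathcal P}_{\bf K})$-almost periodicity are trigonometric polynomials, and those furnished by Levitan semi-$(\phi,{\rm I},{\mathbb F}_{\bf K},{\mathcal B},{\mathcal P}_{\bf K})_{j\in {\mathbb N}_{n}}$-periodicity are (exactly) periodic functions; in either case they are Bohr almost periodic, hence Bochner normal and possessed of relatively dense sets of almost periods. Fix $B\in{\mathcal B}$ (bounded, by (f)) and a non-empty compact $K$ with $K\cap\Lambda\ne\emptyset$, and let $(P_j)_{j\in{\mathbb N}}$ be the corresponding approximants, with
\[
\alpha_j:=\sup_{x\in B}\bigl\|{\mathbb F}_K(\cdot)\bigl[\phi\bigl(\|P_j(\cdot;x)-F(\cdot;x)\|_{Y}\bigr)\bigr]_{|K\cap\Lambda}\bigr\|_{P_K}\longrightarrow 0.
\]
The device common to both parts is, for $\sigma,\tau\in\Lambda''$, the splitting
\[
\|F(\cdot+\sigma;x)-F(\cdot+\tau;x)\|_{Y}\le\|(F-P_j)(\cdot+\sigma;x)\|_{Y}+\|P_j(\cdot+\sigma;x)-P_j(\cdot+\tau;x)\|_{Y}+\|(F-P_j)(\cdot+\tau;x)\|_{Y};
\]
first I would apply monotonicity of $\phi$ and iterate (a) to get $\phi(\mathrm{sum})\le c^{2}[\phi+\phi+\phi]$, then multiply by ${\mathbb F}_K$ and invoke (C3-K) to dominate the $P_K$-norm of the left-hand quantity by $f_K(1+c^{2})$ times the sum of the three $P_K$-norms. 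The two outer terms are translates, by $\sigma$ respectively $\tau$, of ${\mathbb F}_K(\cdot)[\phi(\|(F-P_j)(\cdot;x)\|_Y)]$, which lies in $P_K$ by the definition of strong almost periodicity; hence by (e) each has $P_K$-norm $\le h_K\alpha_j$ after the supremum over $x\in B$. The middle term, a difference of translates of the single trigonometric polynomial (periodic function) $P_j$, is itself of the same type, so (c) and (d) bound its $P_K$-norm by $g_K\sup_{{\bf t}\in K\cap\Lambda}\phi(\|P_j({\bf t}+\sigma;x)-P_j({\bf t}+\tau;x)\|_{Y})$.

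For part (i) I would begin with a Bochner extraction. Writing $P_j=\sum_l e^{i\langle\lambda_l^{(j)},\cdot\rangle}c_l^{(j)}(\cdot)$ as a finite sum and collecting the countably many frequencies over all $j$, a diagonal argument on the compact unit circle produces one subsequence $({\bf b}_{k_m})$ along which $e^{i\langle\lambda_l^{(j)},{\bf b}_{k_m}\rangle}$ converges for every $j$ and every $l$. Because $\sup_{x\in B}\|c_l^{(j)}(x)\|_{Y}<\infty$, for each fixed $j$ the quantity $\sup_{{\bf t},\,x\in B}\|P_j({\bf t}+{\bf b}_{k_m};x)-P_j({\bf t}+{\bf b}_{k_{m'}};x)\|_{Y}$ tends to $0$ as $m,m'\to\infty$, and continuity of $\phi$ at $0$ then kills the middle $P_K$-term. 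Given $\epsilon>0$, I would fix $j$ so large that $h_K\alpha_j$ is negligible and then $m_0$ so that the middle term is small for $m,m'\ge m_0$, taking $\sigma={\bf b}_{k_m}$ and $\tau={\bf b}_{k_{m'}}$; the three-term estimate then yields exactly the Cauchy condition of Definition \ref{dfggg-metkl}.

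For part (ii) I would instead use that $P_j$, being Bohr almost periodic, has for each $\delta>0$ a relatively dense set of $\delta$-almost periods in ${\mathbb R}^n$; intersecting it with $\Lambda'$ (recall $\Lambda'\subseteq\Lambda''$) yields, inside every ball $B({\bf t}_0,l)$ with ${\bf t}_0\in\Lambda'$, a point $\tau\in\Lambda'$ with $\sup_{{\bf t}}\|P_j({\bf t}+\tau;x)-P_j({\bf t};x)\|_{Y}\le\delta$. Taking the two shifts to be $\tau$ and $0$ in the splitting above and recalling that $\rho={\rm I}$ forces $y_{\cdot;x}=F(\cdot;x)$, the middle term is now controlled through (d) by $g_K\phi(\delta)$ while the outer terms remain $\le h_K\alpha_j$; fixing $j$ large and then $\delta$ small makes the combined bound $\le\epsilon$, and the resulting threshold $l$ is precisely what Definition \ref{nafaks-metkl}(i) requires for Levitan $(\phi,{\mathbb F}_{\bf K},{\mathcal B},\Lambda',{\rm I},{\mathcal P}_{\bf K})$-almost periodicity.

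The hard part will not be the metric bookkeeping but the two classical inputs. In part (i) the delicate point is to make the Bochner extraction \emph{simultaneous over all approximation indices $j$ and uniform in $x\in B$}: this is what forces the diagonalization over the frequencies of every $P_j$ and relies on $\sup_{x\in B}\|c_l^{(j)}(x)\|_{Y}<\infty$, for which the boundedness of $B$ in (f) is used. In part (ii) the analogous difficulty is guaranteeing that the relatively dense set of $\delta$-almost periods of $P_j$ actually meets $\Lambda'$ densely, so that the transferred almost periods of $F$ genuinely lie in $\Lambda'$ as the definition demands; once this is secured, the remaining passage through (a), (C3-K), (d) and (e) is routine.
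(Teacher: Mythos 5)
Your proposal is correct and follows essentially the same route as the paper's proof: the identical three-term splitting through an approximating trigonometric polynomial (periodic function), with (a) and (C3-K) handling the subadditivity of $\phi$, (e) controlling the two translated error terms, (c)--(d) controlling the middle term, and the Bochner criterion (resp.\ the relative density of $\delta$-almost periods of the approximant) supplying the classical input for (i) (resp.\ for (ii), which the paper leaves to the reader). Your diagonal extraction over the frequencies of all approximants $P_{j}$ is in fact a more careful treatment of the quantifier order in Definition \ref{dfggg-metkl} --- a single subsequence must work for every $\epsilon$ --- which the paper's written argument elides by fixing $\epsilon$ before choosing the polynomial and the subsequence.
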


\begin{proof}
We will include all details of the proof of (i) for the sake of completeness, considering the class of  Levitan strongly $(\phi,{\mathbb F}_{\bf K},{\mathcal B},{\mathcal P}_{\bf K})$-almost periodic functions, only. Let $F(\cdot;\cdot)$ be such a function,
let $\epsilon>0,$ $B\in {\mathcal B},$ and let $({\bf b}_{k})_{k\in {\mathbb N}}$ belongs to ${\mathrm R}.$ Suppose, further, that $K$ is a non-empty compact set of $ {\mathbb R}^{n}$ such that $K\cap \Lambda \neq \emptyset.$ Then there exists a trigonometric polynomial $ P_{k}^{B,K}(\cdot;\cdot)$ such that
${\mathbb F}_{K}(\cdot)[ \phi(\|P_{k}^{B,K}(\cdot;x)-F(\cdot;x)\|_{Y})]_{| K\cap \Lambda} \in P_{K}$ for all $x\in X,$ and
\begin{align*}
\sup_{x\in B} \Biggl\| {\mathbb F}_{K}(\cdot) \Bigl[ \phi\Bigl(\bigl\|P_{k}^{B,K}(\cdot;x)-F(\cdot;x)\bigr\|_{Y}\Bigr)\Bigr]_{| K\cap \Lambda}\Biggr\|_{P_{K}}<\epsilon/3.
\end{align*}
Using (e), we get:   
\begin{align}\label{crisis}
\sup_{x\in B} \Biggl\| {\mathbb F}_{K}(\cdot) \Bigl[ \phi\Bigl(\bigl\|P_{k}^{B,K}(\cdot+\tau;x)-F(\cdot+\tau;x)\bigr\|_{Y}\Bigr)\Bigr]_{| K\cap \Lambda}\Biggr\|_{P_{K}}<h_{K}\epsilon/3
\end{align}
for all $\tau \in \Lambda''.$ The set $B$ is bounded due to (f), so that the Bochner criterion for almost periodic functions in ${\mathbb R}^{n}$ ensures that there exist a subsequence $({\bf b}_{k_{m}})_{m\in {\mathbb N}}$ of $({\bf b}_{k})_{k\in {\mathbb N}}$ and a natural number $m_{0}\in {\mathbb N}$ such that, for every positive integers $m',\ m''\geq m_{0},$ we have
$$
\sup_{x\in B}\Bigl\| P_{k}^{B,K}\bigl({\bf t}+{\bf b}_{k_{m'}};x\bigr)-P_{k}^{B,K}\bigl({\bf t}+{\bf b}_{k_{m''}};x\bigr)\Bigr\|_{Y}<\epsilon/3,\quad {\bf t}\in {\mathbb R}^{n}.
$$
Using (c)-(d), we obtain:
$$
\sup_{x\in B}\Biggl\| {\mathbb F}_{K}(\cdot)\Bigl[ \phi\Bigl( \Bigl\| P_{k}^{B,K}(\cdot+{\bf b}_{k_{m'}};x)-P_{k}^{B,K}(\cdot+{\bf b}_{k_{m''}};x)\Bigr\|_{Y}\Bigr) \Bigr]_{| K\cap \Lambda}\Biggr\|_{P_{K}}\leq  g_{K} \phi(\epsilon/3),
$$
for all positive integers $m',\ m''\geq m_{0}.$
Then the final conclusion follows from conditions (a)-(b), the estimate \eqref{crisis} and the following decomposition:
\begin{align*}
\Biggl\|{\mathbb F}_{K}(\cdot)&\Bigl[ \phi\Bigl( \bigl\| F(\cdot+{\bf b}_{k_{m'}};x)-F(\cdot+{\bf b}_{k_{m''}};x)\bigr\|_{Y} \Bigr)\Bigr]_{| K\cap \Lambda}\Biggr\|_{P_{K}} 
\\& \leq c'_{K}\Biggl[\Biggl\|{\mathbb F}_{K}(\cdot)\Bigl[ \phi\Bigl( \bigl\| F(\cdot+{\bf b}_{k_{m'}};x)-P(\cdot+{\bf b}_{k_{m'}};x)\bigr\|_{Y} \Bigr)\Bigr]_{| K\cap \Lambda}\Biggr\|_{P_{K}} 
\\&+\Biggl\|{\mathbb F}_{K}(\cdot)\Bigl[ \phi\Bigl( \bigl\| P(\cdot+{\bf b}_{k_{m'}};x)-P(\cdot+{\bf b}_{k_{m''}};x)\bigr\|_{Y} \Bigr)\Bigr]_{| K\cap \Lambda}\Biggr\|_{P_{K}} 
\\&+\Biggl\|{\mathbb F}_{K}(\cdot)\Bigl[ \phi\Bigl( \bigl\| P(\cdot+{\bf b}_{k_{m''}};x)-F(\cdot+{\bf b}_{k_{m''}};x)\bigr\|_{Y} \Bigr)\Bigr]_{| K\cap \Lambda}\Biggr\|_{P_{K}} \Biggr],
\end{align*}
holding with a certain positive real constant $c'_{K}>0.$
\end{proof}

The method proposed in the proof of \cite[Theorem 6.1.37]{nova-selected} does not work for Levitan $N$-almost periodic functions and uniformly Poisson stable functions and, because of that, we will omit here all details regarding the extensions of Levitan $N$-almost periodic functions and the extensions of  uniformly Poisson stable functions. Further on, the statement of \cite[Proposition 2.10]{chaouchi-metr} can be formulated in our new framework under certain very restrictive assumptions.
Before proceeding further, we will also note that the class of metrical semi-$(c_{j},{\mathcal B})_{j\in {\mathbb N}_{n}}$-periodic functions has been analyzed in \cite[Subsection 2.2]{chaouchi-metr} following the initial analysis of J. Andres and D. Pennequin in \cite{andres}. The Levitan and Bebutov classes of metrical semi-$(c_{j},{\mathcal B})_{j\in {\mathbb N}_{n}}$-periodic functions can be considered similarly; we leave all details concerning these topics to the interested readers.

\subsection{Levitan $(N,c)$-almost periodic functions and uniformly Poisson $c$-stable functions}\label{zajeg}

In this subsection, we will provide some examples and 
open questions 
concerning the Levitan $(N,c)$-almost periodic functions and the uniformly Poisson $c$-stable functions, where $c\in {\mathbb C} \setminus \{0\};$ for simplicity, we will always consider here the case in which $\Lambda={\mathbb R}^{n},$
$\phi(x) \equiv x,$ ${\mathbb F}_{\bf K}\equiv 1$ and $P_{K}=C(K)$ for each non-empty compact set $K\subseteq {\mathbb R}^{n}.$ Let $\rho=c{\rm I};$ then any 
Levitan $(\phi,{\mathbb F}_{\bf K},{\mathcal B},\Lambda',\rho,{\mathcal P}_{\bf K})$-almost periodic function
is simply called  Levitan $(N,\Lambda',c)$-almost periodic and any Bebutov $(\phi,{\mathbb F}_{\bf K},{\mathcal B},\Lambda',\rho,{\mathcal P}_{\bf K})$-uniformly recurrent function is simply called uniformly Poisson $(\Lambda',c)$-stable [we omit the term ``$\Lambda'$'' from the notation if $\Lambda'={\mathbb R}^{n}$].  

The proofs of \cite[Proposition 4.2.14, Proposition 7.1.13]{nova-selected} do not work for the Levitan/Bebutov concepts; because of that, we would like to ask the following:\vspace{0.1cm}

\noindent {\bf Problem.} Suppose that $c\in {\mathbb C}\setminus \{0,1\}.$ Can we find a bounded continuous function $F: {\mathbb R}^{n} \rightarrow {\mathbb C}$ which is Levitan $(N,c)$-almost periodic (uniformly $c$-Poisson stable) but not Levitan $(N,c^{2})$-almost periodic 
(uniformly $c^{2}$-Poisson stable)? \vspace{0.1cm}

The following important counterexample shows that there exists a 
non-trivial uniformly Poisson $c$-stable function $F : {\mathbb R}\rightarrow {\mathbb R}$ for any complex number $c\neq 0;$ this counterexample is based on the conclusions established in \cite[Lemma 3.5]{gejaka} and particularly shows that the requirements of \cite[Proposition 7.1.9]{nova-selected} does not imply $c=\pm 1 $ for uniformly Poisson $c$-stable functions:

\begin{example}\label{rasx}
Consider the function $\varphi : {\mathbb R}\rightarrow [0,\infty)$ given by
\begin{align*}
\varphi(t):=\sum_{k=1}^{\infty}\sin^{2}\Bigl( \frac{\pi t}{2^{k}}\Bigr),\quad t\in {\mathbb R}.
\end{align*}
In the above-mentioned lemma, 
E. Ait Dads, B. Es-sebbar and L. Lhachimi have proved that the function $\varphi(\cdot)$ is continuous and 
\begin{align}\label{dewsc}
\lim_{l\rightarrow +\infty}\varphi\bigl( t+2^{l}\bigr)=\lim_{l\rightarrow +\infty}\varphi\bigl( t-2^{l}\bigr)=\varphi(t)+\varphi(1),
\end{align}
pointwisely on ${\mathbb R}.$ We will first prove that the function $\varphi(\cdot)$ is Lipschitz continuous as well as that the convergence in \eqref{dewsc} is uniform on compact subsets of ${\mathbb R}.$ The uniform continuity of function $\varphi(\cdot)$ simply follows from the fact that the function $\sin^{2}\cdot$ is Lipschitz with the corresponding Lipschitz constant $L=1$ and the Lagrange mean value theorem, which shows that $|\varphi(x)-\varphi(y)|\leq \pi |x-y|,$ $x,\ y\in {\mathbb R}.$ Let $K=[a,b]\subseteq {\mathbb R}$ be a compact set. Then we have:
\begin{align*}
\varphi\bigl( t+2^{l}\bigr)=\sum_{k=1}^{l}\sin^{2}\Bigl( \frac{\pi t}{2^{k}}\Bigr)+\sum_{k=1}^{\infty}\sin^{2}\Bigl( \frac{\pi t}{2^{k+l}}+ \frac{\pi }{2^{k}}\Bigr),\quad t\in {\mathbb R}.
\end{align*}
Using this equality, the above-mentioned fact that the function $\sin^{2}\cdot$ is Lipschitz continuous with the corresponding Lipschitz constant $L=1$ and the Langrange mean value theorem, we simply get that the convergence in \eqref{dewsc} is uniform in $t\in K$ since:
$$
\sum_{k=l+1}^{\infty}\sin^{2}\Bigl( \frac{\pi t}{2^{k}}\Bigr)\leq \sum_{k=l+1}^{\infty} \frac{\pi^{2} (|a|+|b|)^{2}}{4^{k}},\quad k,\ l\in {\mathbb N}
$$
and
$$
\Biggl| \sum_{k=1}^{\infty}\sin^{2}\Bigl( \frac{\pi t}{2^{k+l}}+\frac{\pi}{2^{k}}\Bigr)- \sum_{k=1}^{\infty}\sin^{2}\Bigl( \frac{\pi }{2^{k}}\Bigr)\Biggr| \leq \pi (|a|+|b|)\sum_{k=l+1}2^{-k},\quad k,\ l\in {\mathbb N};
$$
here we have also used the elementary inequality $|\sin t| \leq |t|,$ $t\in {\mathbb R}.$ Therefore, we have that the function $\varphi(\cdot)$ is Bebutov $(x,1,\{2^{l} : l\in {\mathbb N}\},\rho,{\mathcal P}_{K})$-uniformly recurrent, where $P_{K}=C(K)$ for each non-empty compact subset $K$ of ${\mathbb R},$ $D(\rho):=[0,\infty)$ and $\rho(\varphi(t)):=\varphi(t)+\varphi(1)$ for all $t\in {\mathbb R}$ (note that the function $\varphi(\cdot)$ is surjective since it is continuous, $\varphi(0)=0$ and the function $\varphi(\cdot)$ is Besicovitch unbounded, which follows from the inequality $\varphi(t)\geq f(\pi t),$ $t\in {\mathbb R}$ with the function $f(\cdot)$ being defined through \eqref{jednac}).

Suppose now that $c\in {\mathbb C} \setminus \{0\}$ is fixed and consider any non-trivial continuous $(\varphi(1),c)$-periodic function $p : {\mathbb R}\rightarrow {\mathbb C}$, i.e., any non-trivial continuous function 
$p : {\mathbb R}\rightarrow {\mathbb C}$ such that $p(t+\varphi(1))=cp(t)$ for all $t\in {\mathbb R}.$ Then an elementary argumentation involving the equation \eqref{dewsc} and the uniform continuity of function $p(\cdot)$ on the interval $[-a,a],$ where $a=\max_{t\in K}(\varphi(t)+\varphi(1)+1),$
shows that the function $(p\circ \varphi)(\cdot)$ is uniformly Poisson $c$-stable, non-trivial and
$\lim_{l\rightarrow +\infty}(p\circ \varphi)(\cdot +2^{l})=(p\circ \varphi)(\cdot),$ uniformly on compacts of ${\mathbb R}.$

Let us finally consider the case $c=1$ and the function $p(t):=\sin (2\pi \varphi(t)/\varphi(1)),$ $t\in {\mathbb R}.$ Then $p(\cdot)$ is of period $\varphi(1),$ and we know that the function $(p\circ \varphi)(\cdot)$ is uniformly Poisson stable. At the end of this example,
we would like to ask whether the function $(p\circ \varphi)(\cdot)$ is uniformly recurrent (cf. also \cite[Proposition 3.6]{gejaka})?
\end{example}

Before proceeding to the next subsection, we would also like to ask whether we can construct a continuous Levitan $(N,c)$-almost periodic function
$f : {\mathbb R} \rightarrow {\mathbb R}$ for any complex number $c\neq 0?$

\subsection{Multi-dimensional Levitan $N$-almost periodic functions}\label{kura-tura}

Let us recall that a continuous function
$F : {\mathbb R}^{n} \rightarrow X$ is almost periodic if and only if for each finite real number $\epsilon>0$ there exist relatively dense sets $E^{j}$
in ${\mathbb R}$ ($1\leq j\leq n$) such that the set $E\equiv \prod_{j=1}^{n}E^{j}$ is contained in the set of all
$\epsilon$-almost periods of $F(\cdot);$ see, e.g., \cite[Introduction, pp. 31--32]{nova-selected}. Motivated by this fact, we 
will first introduce the following notion (let us only note here that we can similarly reconsider the notion of pre-Levitan almost periodicity):

\begin{defn}\label{srnic}
Suppose that $F : {\mathbb R}^{n} \rightarrow X$ is a continuous function. Then we say that the function $F(\cdot)$ is strongly Levitan $N$-almost periodic if and only if $F(\cdot)$ is pre-Levitan almost periodic and, for every real numbers
$N>0$ and $\epsilon>0,$ there exist a finite real number $\eta >0$ and the relatively dense sets $E_{\eta;N}^{j}$
in ${\mathbb R}$ ($1\leq j\leq n$) such that the set $E_{\eta;N}\equiv \prod_{j=1}^{n}E_{\eta;N}^{j}$ consists solely of
$(\eta,N)$-almost periods of $F(\cdot)$ and $E_{\eta;N} \pm E_{\eta;N} \subseteq E(\epsilon, N).$
\end{defn}

It is clear that any strongly Levitan $N$-almost periodic function $F : {\mathbb R}^{n} \rightarrow X$ is Levitan $N$-almost periodic as well as that the both notions coincide in the one-dimensional setting.

We continue this section by introducing the following notion, which has been proposed for the first time by V. A. Marchenko in \cite{marchenko} (1950, the one-dimensional setting):

\begin{defn}\label{marche}
Suppose that $F : {\mathbb R}^{n} \rightarrow X$ is a continuous function. Then we say that $F(\cdot)$
is
Levitan $N$-almost periodic of type $1$ if and only if for each $j\in {\mathbb N}_{n}$ there exists a real sequence $(\lambda_{k}^{j})$ such that, for every $N>0$ and $\epsilon>0,$ there exist numbers $k\in {\mathbb N}$ and $\delta>0$ such that any point $\tau=(\tau_{1},\tau_{2},...,\tau_{n})\in {\mathbb R}^{n}$ satisfying
$|\lambda_{l}^{j}\tau_{j}|\leq \delta \ \ (\mbox{mod} \ \ 2\pi)$ for all $l\in {\mathbb N}_{k}$ and $j\in {\mathbb N}_{n}$ is an $\epsilon,N$-almost period of the function $F(\cdot).$
\end{defn}

Now we will clarify the following result:

\begin{prop}\label{bengeder}
Consider the following statements:
\begin{itemize}
\item[(i)] $F(\cdot)$
is Levitan $N$-almost periodic of type $1.$
\item[(ii)] $F(\cdot)$ is strongly Levitan $N$-almost periodic.
\item[(iii)] $F(\cdot)$ is Levitan $N$-almost periodic.
\end{itemize}
Then \emph{(i)} $\Leftrightarrow$ \emph{(ii)} and \emph{(ii)} $\Rightarrow$ \emph{(iii)}.
\end{prop}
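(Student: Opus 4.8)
The plan is to dispatch the two easy implications directly from the product structure and to isolate the one genuinely delicate equivalence, (i) $\Leftrightarrow$ (ii), which rests on two classical one-dimensional facts. Write $E(\epsilon,N)$ for the set of $\epsilon,N$-almost periods of $F(\cdot)$, and for reals $\lambda_1,\dots,\lambda_k$ and $\delta>0$ set $B(\lambda_1,\dots,\lambda_k;\delta):=\{t\in\mathbb{R}:|\lambda_l t|\le\delta\ (\mathrm{mod}\ 2\pi),\ 1\le l\le k\}$. The two ingredients are: (A) every such Bohr set is relatively dense in $\mathbb{R}$ (a Dirichlet/Kronecker pigeonhole argument); and (B) Bogolyubov's theorem in the form that the difference set $E-E$ of a relatively dense set $E\subseteq\mathbb{R}$ contains a Bohr set $B(\mu_1,\dots,\mu_r;\delta)$ (see \cite[pp. 55--57]{levitan}). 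I will use throughout the elementary monotonicity $E(\epsilon',N')\subseteq E(\epsilon,N)$ for $\epsilon'\le\epsilon,\ N'\ge N$, and the fact that a product $\prod_{j=1}^n E^j$ of coordinatewise relatively dense sets is relatively dense in $\mathbb{R}^n$ (inclusion length $(\sum_j l_j^2)^{1/2}$).

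For (i) $\Rightarrow$ (ii), fix $N,\epsilon$ and apply Definition \ref{marche} to get $k\in\mathbb{N}$, $\delta>0$ with $\prod_{j=1}^n B(\lambda_1^j,\dots,\lambda_k^j;\delta)\subseteq E(\epsilon,N)$. I would set $E^j_{\eta;N}:=B(\lambda_1^j,\dots,\lambda_k^j;\delta/2)$ and $\eta:=\epsilon$. By (A) each $E^j_{\eta;N}$ is relatively dense, hence so is $E_{\eta;N}=\prod_j E^j_{\eta;N}$; since $\delta/2<\delta$ we have $E_{\eta;N}\subseteq\prod_j B(\lambda_1^j,\dots,\lambda_k^j;\delta)\subseteq E(\epsilon,N)$, so $E_{\eta;N}$ consists solely of $(\eta,N)$-almost periods. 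The closure property is the triangle inequality modulo $2\pi$: if $\tau,\sigma\in E_{\eta;N}$ then $|\lambda_l^j(\tau_j\pm\sigma_j)|\le\delta\ (\mathrm{mod}\ 2\pi)$ coordinatewise, whence $\tau\pm\sigma\in\prod_j B(\lambda_1^j,\dots,\lambda_k^j;\delta)\subseteq E(\epsilon,N)$, i.e. $E_{\eta;N}\pm E_{\eta;N}\subseteq E(\epsilon,N)$. Finally each $E(\epsilon,N)$ contains the relatively dense set $\prod_j B(\lambda_1^j,\dots,\lambda_k^j;\delta)$ by (A), so $F$ is pre-Levitan almost periodic, completing (ii).

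For (ii) $\Rightarrow$ (i), which I expect to be the main obstacle, first fix a countable cofinal family of pairs $(N_p,\epsilon_p)_{p\in\mathbb{N}}$ with $N_p\uparrow\infty$ and $\epsilon_p\downarrow0$. For each $p$, Definition \ref{srnic} supplies relatively dense sets $E^j_p$ ($1\le j\le n$) with $\bigl(\prod_j E^j_p\bigr)-\bigl(\prod_j E^j_p\bigr)=\prod_j(E^j_p-E^j_p)\subseteq E(\epsilon_p,N_p)$. Applying the one-dimensional theorem (B) to each $E^j_p$ yields frequencies $\mu^{j,p}_1,\dots,\mu^{j,p}_{r_{j,p}}$ and $\delta_{j,p}>0$ with $B(\mu^{j,p}_1,\dots,\mu^{j,p}_{r_{j,p}};\delta_{j,p})\subseteq E^j_p-E^j_p$; setting $\delta_p:=\min_j\delta_{j,p}$ gives $\prod_j B(\mu^{j,p}_1,\dots,\mu^{j,p}_{r_{j,p}};\delta_p)\subseteq E(\epsilon_p,N_p)$. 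I would then define $(\lambda_k^j)_k$ by concatenating over $p$ the finite blocks $\mu^{j,p}_1,\dots,\mu^{j,p}_{r_{j,p}}$, a sequence independent of $N,\epsilon$. Given $N,\epsilon$, choose $p$ with $N_p\ge N$, $\epsilon_p\le\epsilon$, take $k$ large enough that for every $j$ the segment $\lambda_1^j,\dots,\lambda_k^j$ contains all $\mu^{j,p}_i$, and put $\delta:=\delta_p$; then any $\tau$ with $|\lambda_l^j\tau_j|\le\delta\ (\mathrm{mod}\ 2\pi)$ for all $l\le k$, $j\le n$ in particular lies in $\prod_j B(\mu^{j,p}_1,\dots,\mu^{j,p}_{r_{j,p}};\delta_p)\subseteq E(\epsilon_p,N_p)\subseteq E(\epsilon,N)$, which is exactly the type-$1$ condition. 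The delicate point is the coordinatewise use of Bogolyubov: it is legitimate precisely because the strong definition hands us genuine product sets $\prod_j E^j_p$, which is what fails in the non-product Levitan $N$-almost periodic case for $n\ge2$ (consistent with the counterexample announced in the introduction).

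Finally, (ii) $\Rightarrow$ (iii) is immediate: the sets $E_{\eta;N}=\prod_j E^j_{\eta;N}$ of Definition \ref{srnic} are relatively dense (product of relatively dense sets), consist of $(\eta,N)$-almost periods, and satisfy $E_{\eta;N}\pm E_{\eta;N}\subseteq E(\epsilon,N)$; together with the pre-Levitan almost periodicity already contained in (ii), these are exactly the requirements defining Levitan $N$-almost periodicity, so nothing further is needed.
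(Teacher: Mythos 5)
Your implications (i) $\Rightarrow$ (ii) and (ii) $\Rightarrow$ (iii) are correct, and your overall strategy for (ii) $\Rightarrow$ (i) — exploit the product structure of Definition \ref{srnic} to reduce to $n$ one-dimensional applications of Kronecker and Bogolyubov — is exactly the route the paper intends (its proof is a one-line citation of Kronecker's theorem, the Bogolyubov theorem and pp.~54--55, 57 of \cite{188}). However, your ingredient (B) misstates the Bogolyubov theorem, and this creates a genuine gap in (ii) $\Rightarrow$ (i). Bogolyubov's theorem does \emph{not} assert that $E-E$ contains a Bohr set for every relatively dense $E\subseteq\mathbb{R}$; it asserts this for the four-fold difference $E-E+E-E$ (plus a small interval $B_{\delta}$ in the continuous setting) — this is precisely the form (S) recorded later in the paper. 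The question of whether $E-E$ alone must contain a Bohr neighbourhood of $0$ for syndetic (or positive-density) $E$ is a well-known open problem in additive combinatorics, so it cannot be invoked as a classical fact. Consequently your chain $\prod_{j}B(\mu^{j,p}_{1},\dots,\mu^{j,p}_{r_{j,p}};\delta_{p})\subseteq\prod_{j}(E^{j}_{p}-E^{j}_{p})\subseteq E(\epsilon_{p},N_{p})$ is unsupported at its first inclusion.

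Repairing this is not merely cosmetic. With the correct Bogolyubov statement you land in $\prod_{j}(E^{j}_{p}-E^{j}_{p}+E^{j}_{p}-E^{j}_{p}+B_{\delta})=(E_{p}-E_{p})+(E_{p}-E_{p})+B_{\delta}^{n}$, and the definition only gives $E_{p}\pm E_{p}\subseteq E(\epsilon_{p},N_{p})$; since $E(\epsilon,N)+E(\epsilon,N)\not\subseteq E(2\epsilon,N)$ in general (the translate $t+\tau_{1}$ need not stay in the ball of radius $N$), you cannot conclude directly. One has to reproduce the iteration from Levitan's one-dimensional argument — apply the defining condition at two nested levels of $(\eta,N)$ so that the four-fold difference of the inner set still lands in $E(\epsilon,N)$, and use the (local uniform) continuity of $F$ to absorb the $B_{\delta}$ term. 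Your observation that the product sets $\prod_{j}E^{j}_{p}$ are what make the coordinatewise reduction legitimate (and what fails for plain Levitan $N$-almost periodicity when $n\geq 2$) is the right key idea and matches the paper; but as written, the (ii) $\Rightarrow$ (i) direction rests on an unproved strengthening of Bogolyubov and omits the bookkeeping that the correct version forces.
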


\begin{proof}
Using Kronecker's theorem, the Bogolyubov theorem and the argumentation given on \cite[pp. 54-55, 57]{188} we can simply prove, as in the one-dimensional setting, that the statements (i) and (ii) are equivalent. The statement (ii) $\Rightarrow$ (iii) is trivial.
\end{proof}

Further on, we can simply show that the Levitan $N$-almost periodic functions
of type $1$ form a vector space with the usual operations. Due to Proposition \ref{bengeder}, it follows that the strongly Levitan $N$-almost periodic form a vector space with the usual operations. 
Concerning this result, it is logical to ask whether the Levitan $N$-almost periodic functions
form a vector space with the usual operations as well as whether 
the all statements 
from the formulation of Theorem \ref{bengeder}
are mutually equivalent, i.e., whether the implication (iii) $\Rightarrow$ (i) is true? Concerning the last question, we would like to notice that
the arguments contained on p. 57 of \cite{188} can be repeated once more in order to see that this is true, provided that the following generalization of the Bogolyubov theorem holds true in the higher-dimensional setting:

\begin{itemize}
\item[(S)]
Suppose that $A$ is a relatively dense set in ${\mathbb R}^{n}$ and $\delta>0.$ Then there exist a real number $\eta>0$ and a finite set $\{\omega_{1},...,\omega_{k}\}$ in ${\mathbb R}^{n}$ such that
$$
\Bigl \{ \tau \in {\mathbb R}^{n} : \bigl| e^{i\omega_{l}\tau}-1\bigr|<\eta\mbox{ for all }l=1,....,k \Bigr\} \subseteq A-A+A-A+B_{\delta}.
$$
\end{itemize}

Unfortunately, the statement (S) does not hold in the case that $n\geq 2,$ as the following trivial counterexample shows:

\begin{example}\label{lalala}
Suppose that $A={\mathbb Z}^{n},$ $n\geq 2$ and $\delta \in (0,1/4);$ then $A$ is relatively dense in ${\mathbb R}^{n}.$ If (S) is true, then there exist a sufficiently small number $\eta_{0}>0$ and two real numbers $\omega_{1}$ and $\omega_{2}$ (take $k=1$ in (S)) such that $W=\{ \tau \in {\mathbb R}^{n} : |\omega_{1}\tau_{1}+...+\omega_{n}\tau_{n}| \leq \eta_{0} \mbox{ (mod }2\pi) \}\subseteq A-A+A-A+B_{\delta}=A+B_{\delta}.$ If $\omega_{1}=\omega_{2}=0,$ then $W={\mathbb R}^{n}$ and the contradiction is obvious; otherwise, $W$ is a hyperplane in ${\mathbb R}^{n}$ and this clearly implies that $W$ cannot be contained in $A+B_{\delta}=(A-A)+...+(A-A)+B_{\delta}$, where we have exactly $l$ addends in the above sum with the number $l\in {\mathbb N}\setminus \{1\}$ being arbitrarily chosen. 
\end{example}

Therefore, it is also important to ask whether the Bogolyubov theorem can be satisfactorily formulated in the higher-dimensional setting?

We close this subsection with the observation that we will not reconsider the approximation theorem for multi-dimensional Levitan $N$-almost periodic functions and the uniqueness theorem for multi-dimensional Levitan $N$-almost periodic functions here; see \cite[pp. 60--62]{levitan} for the corresponding results established in the one-dimensional setting.

\section{Applications to the abstract Volterra integro-differential equations}\label{bebprim}

In this section, we will present several new theoretical results and their applications to the abstract Volterra integro-differential equations and the classical partial differential equations.
We start with the following theme:

\subsection{The actions of infinite convolution products}\label{tackice}

In the one-dimensional setting, the analysis of existence and uniqueness of almost periodic type solutions  
for various classes of the abstract Volterra integro-differential equations without initial conditions leans heavily on the analysis of the qualitative features of 
the infinite convolution product
\begin{align}\label{pikford}
t 
\mapsto F(t):=\int^{t}_{-\infty} R(t-s)f(s)\, ds,\quad t\in {\mathbb R}.
\end{align}

The main result of this subsection reads as follows:

\begin{prop}\label{mkmkpik}
Let $\emptyset \neq \Lambda'\subseteq {\mathbb R},$ let $A$ be a closed linear operator on $X,$ and let $(R(t))_{t>  0}\subseteq L(X)$ be a strongly continuous operator family such that $R(t)A\subseteq AR(t)$ for all $t>0$  and
$\int_{0}^{\infty}\|R(t )\|\, dt<\infty .$ Let ${\mathcal P}_{K}=C(K)$ for each non-empty compact set $K$ of ${\mathbb R}.$
If $f : {\mathbb R} \rightarrow X$ is a bounded, continuous and Levitan $(x,\Lambda',A,{\mathcal P}_{K})$-almost periodic, resp. bounded, continuous and  Bebutov 
$(x,\Lambda',A,{\mathcal P}_{K})$-uniformly recurrent, and the function $Af : {\mathbb R}\rightarrow X$ is well defined and bounded, then the function $F: {\mathbb R} \rightarrow X,$ given by
\eqref{pikford},
is bounded, continuous and Levitan $(x,\Lambda',A,{\mathcal P}_{K})$-almost periodic, resp. bounded, continuous and Bebutov 
$(x,\Lambda',A,{\mathcal P}_{K})$-uniformly recurrent.
\end{prop}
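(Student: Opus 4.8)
The goal is to transfer the Levitan $(x,\Lambda',A,\mathcal{P}_K)$-almost periodicity (resp. Bebutov uniform recurrence) of $f$ through the convolution operator $f \mapsto F = \int_{-\infty}^{t} R(t-s)f(s)\,ds$. The natural strategy is to work directly from the definitions: fix a compact set $K \subseteq \mathbb{R}$, a shift $\tau \in \Lambda'$, and estimate $\|F(t+\tau) - Ay_{t}\|$ on $K$, where $y_t \in \rho(f(t)) = Af(t)$ comes from the hypothesis on $f$. The key algebraic observation is that since $\rho = A$ is a closed linear operator commuting with $R(t)$, and since we are told $Af$ is well defined and bounded, the witness for $F$ at the point $t$ should be $AF(t) = \int_{-\infty}^{t} R(t-s)(Af)(s)\,ds$; one first checks this identity using $R(t)A \subseteq AR(t)$, closedness of $A$, and the absolute integrability $\int_0^\infty \|R(t)\|\,dt < \infty$, which justifies pulling $A$ out of the integral.

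\textbf{Main estimate.} The heart of the argument is the computation
\begin{align*}
F(t+\tau) - (AF)(t) &= \int_{-\infty}^{t+\tau} R(t+\tau-s)f(s)\,ds - \int_{-\infty}^{t} R(t-s)(Af)(s)\,ds \\
&= \int_{-\infty}^{t} R(t-s)\bigl[ f(s+\tau) - (Af)(s) \bigr]\,ds,
\end{align*}
where I substitute $s \mapsto s+\tau$ in the first integral and use that $(AF)(t) = \int_{-\infty}^t R(t-s)(Af)(s)\,ds$. Taking norms and using $\int_0^\infty \|R(u)\|\,du =: M < \infty$, I would bound
\begin{align*}
\sup_{t \in K} \bigl\| F(t+\tau) - (AF)(t) \bigr\| \leq M \cdot \sup_{s \in \mathbb{R}} \bigl\| f(s+\tau) - (Af)(s) \bigr\|.
\end{align*}
Here the subtlety is that the supremum on the right runs over all of $\mathbb{R}$, not just over $K$, because the convolution at a point $t$ sees all past values of $f$. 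This is exactly why the hypothesis must give control of $f(\cdot+\tau) - Af(\cdot)$ on \emph{every} compact set: to make the right-hand side small I need the almost-period $\tau$ to be chosen relative to a sufficiently large compact set, and I must exploit that for the one-dimensional setting with $P_K = C(K)$ the norm $\|\cdot\|_{P_K}$ is the sup-norm on $K$. Thus I would use a cutting argument: split the integral at some $-T$, making the tail $\int_{-\infty}^{t-T}$ small uniformly in $t \in K$ via $\int_T^\infty \|R(u)\|\,du \to 0$, and control the remaining integral over $[t-T, t]$ (a compact range as $t$ ranges over $K$) using the Levitan almost-periodicity of $f$ on the enlarged compact set.

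\textbf{Where the work concentrates.} For the Levitan almost periodic case I would argue: given $\epsilon > 0$ and $K$, first choose $T$ so the tail contributes at most $\epsilon/2$, then apply the Levitan property of $f$ to the compact set $K_T := \{t - u : t \in K,\ 0 \leq u \leq T\}$ with tolerance $\epsilon/(2M)$ to obtain a relatively dense set of admissible $\tau \in \Lambda'$; each such $\tau$ then makes $\sup_{t \in K}\|F(t+\tau) - AF(t)\| \leq \epsilon$, which is precisely the required inequality with witness $y_{t} = AF(t) \in A(F(t))$. The Bebutov case is entirely parallel: I replace the relatively dense set by a sequence $(\tau_k)$ with $|\tau_k| \to \infty$ furnished by the uniform recurrence of $f$, and pass the convergence $\sup\|f(\cdot+\tau_k) - Af(\cdot)\| \to 0$ through the same integral bound. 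The boundedness and continuity of $F$ follow routinely from boundedness of $f$ and $M < \infty$ together with dominated convergence. I expect the \emph{main obstacle} to be the interchange $A\int = \int A$ needed to identify the witness $AF(t)$: this requires combining $R(t)A \subseteq AR(t)$, the closedness of $A$, and an approximation of the improper integral by Riemann sums (or a limit of proper integrals) to which closedness applies — a standard but delicate point that must be handled carefully because $A$ is unbounded.
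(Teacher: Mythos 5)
Your proposal is correct and follows essentially the same route as the paper's proof: the witness for $F$ is $AF(t)=\int_{-\infty}^{t}R(t-s)(Af)(s)\,ds$ (justified via $R(t)A\subseteq AR(t)$ and closedness of $A$), the key identity is $F(t+\tau)-AF(t)=\int_{-\infty}^{t}R(t-s)[f(s+\tau)-Af(s)]\,ds$, and the integral is split at a cutoff so that the tail is controlled by $\int_{T}^{\infty}\|R(u)\|\,du$ and the remaining part by the Levitan/Bebutov property of $f$ on the enlarged compact set. The paper writes out only the Bebutov case with the cutoff $c<a$ and the enlarged compact $[c,b]$, which matches your $K_T$ construction exactly.
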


\begin{proof}
We will consider only Bebutov 
$(x,\Lambda',A,{\mathcal P}_{K})$-uniformly recurrent functions.
It is clear that the function $F(\cdot)$ is well-defined and bounded since
$$
F(t):=\int_{0}^{\infty}R(s)f(t-s)\, ds,\quad t\in {\mathbb R},
$$
as well as $\int_{0}^{\infty}\|R(t )\|\, dt<\infty $  and the function $f(\cdot)$ is bounded. The continuity of $F(\cdot)$ simply follows from the dominated convergence theorem. Since $R(t)A\subseteq AR(t)$ for all $t>0$  and the function $Af(\cdot)$ is bounded, we have that $AF(t)=\int_{0}^{\infty}R(s)Af(t-s)\, ds$ for all $ t\in {\mathbb R}.$ Let $\epsilon>0,$ let $-\infty<a<b<+\infty,$ and let $K=[a,b].$ Further on, let $-\infty <c<a$ be such that 
\begin{align}\label{ok}
\bigl(\|f\|_{\infty}+\|Af\|_{\infty}\bigr)\cdot \int_{a-c}^{\infty}\| R(s)\|\, ds<\epsilon/2.
\end{align}
Then there exists a sequence $({\bf \tau}_{k})$ in $\Lambda'$ such that $\lim_{k\rightarrow +\infty} |{\bf \tau}_{k}|=+\infty$ and 
\begin{align}\label{sqz}
\Biggl(\int_{0}^{\infty}\|R(t )\|\, dt \Biggr) \cdot \sup_{t\in [c,b]}\bigl\| f(t+\tau_{k})-Af(t) \bigr\| <\epsilon/2.
\end{align}
The final conclusion simply follows from the estimates \eqref{ok}-\eqref{sqz} and the next computation, which holds for every $k\in {\mathbb N}$ and $t\in K:$ 
\begin{align*}
& \bigl\|  F(t+\tau_{k})-AF(t)\bigr\| =\Biggl\| \int^{t}_{-\infty} R(t-s)\bigl[ f(s+\tau_{k})-Af(s) \bigr]\, ds  \Biggr\|
\\ & \leq \int^{c}_{-\infty} \| R(t-s)\| \cdot \bigl\| f(s+\tau_{k})-Af(s) \bigr\| \, ds+ \int^{t}_{c} \| R(t-s)\| \cdot \bigl\| f(s+\tau_{k})-Af(s) \bigr\| \, ds
\\& \leq \bigl(\|f\|_{\infty}+\|Af\|_{\infty}\bigr)\cdot  \int_{a-c}^{\infty}\| R(s)\|\, ds 
\\&+ \Biggl(\int^{c}_{t}\| R(t-s)\|\, ds \Biggr)  \cdot \sup_{s\in [c,b]}\bigl\| f(s+\tau_{k})-Af(s) \bigr\| \leq (\epsilon/2)+(\epsilon/2)=\epsilon.
\end{align*}
\end{proof}

\begin{rem}\label{ejha}
\begin{itemize}
\item[(i)] Suppose that $\nu_{K} : K \rightarrow (0,\infty)$ satisfies that the function $1/\nu_{K}(\cdot)$ is bounded. Then the 
use of weighted function space
$C_{b,\nu_{K}}(K )$ seems to be discutable here since the argumentation contained in the proof given above indicates that the function $\nu_{K}(\cdot)$ has to be bounded, as well, unless we assume certain very restrictive conditions.
\item[(ii)] The statement of Proposition \ref{mkmkpik} can be transferred to the multi-dimensional setting without any serious difficulty; cf. \cite{nova-selected} for more details.
\item[(iii)] For simplicity, let us consider here the class of Levitan $(x,{\mathbb R},{\rm I},{\mathcal P}_{K})$-almost periodic functions. If we additionally suppose that for each $N>0$ and $\epsilon>0$ there exists a 
relatively dense set $E_{\eta;N}$ of $(\eta,N)$-almost periods of $f(\cdot)$ such that $E_{\eta;N} \pm E_{\eta;N} \subseteq E(\epsilon, N),$ then the same property holds for the function $F(\cdot).$
\end{itemize}
\end{rem}

As mentioned many times before, our results about the convolution invariance of generalized metrical Levitan almost periodicity and 
generalized metrical Bebutov uniform recurrence under the actions of the infinite convolution product \eqref{pikford} 
can be successfully applied in the qualitative analysis of solutions for a large class of the abstract (degenerate) Volterra integro-differential equations without initial conditions. For example, Proposition \ref{mkmkpik} is clearly applicable in the analysis of the existence and uniqueness of Bebutov uniformly recurrent solutions of 
the initial value problems with constant coefficients
\[
\begin{array}{l}
D_{t,+}^{\gamma}u(t,x)=\sum_{|\alpha|\leq k}a_{\alpha}f^{(\alpha)}(t,x)+f(t,x),\ t\in {\mathbb R},\ x\in \mathbb{R}^n
\end{array}
\] 
in the space $L^{p}(\mathbb{R}^n),$ where 
$\gamma \in (0,1),$
$D_{t,+}^{\gamma}u(t)$ denotes the Weyl-Liouville fractional derivative of order $\gamma$ and
$1\leq p<\infty ;$
cf. also \cite{bloch} and \cite{nova-mono} for many similar applications of this type. 

\subsection{The convolution invariance and some applications}\label{216}

In this subsection, we continue our investigation from the previous subsection by examining the convolution invariance of Levitan almost like periodicity. In actual fact, we would like to observe that
the method proposed in the proof of Proposition \ref{mkmkpik} enables one to simply deduce the following result:

\begin{prop}\label{mkmkpik1}
Let $\emptyset \neq \Lambda'\subseteq {\mathbb R}^{n},$ let $A$ be a closed linear operator on $X,$ $h\in L^{1}({\mathbb R}^{n}),$ and let ${\mathcal P}_{K}=C(K)$ for each non-empty compact set $K$ of ${\mathbb R}.$
If $f : {\mathbb R}^{n} \rightarrow X$ is bounded, continuous and Levitan $(x,\Lambda',A,{\mathcal P}_{K})$-almost periodic, resp. bounded, continuous and Bebutov 
$(x,\Lambda',A,{\mathcal P}_{K})$-uniformly recurrent, and the function $Af : {\mathbb R}^{n}\rightarrow X$ is well defined and bounded, then the function $H: {\mathbb R}^{n} \rightarrow X,$ given by
$$
H({\bf t}):= (h\ast f)({\bf t}):= \int_{{\mathbb R}^{n}}h({\bf t}-{\bf s})f({\bf s})\, d{\bf s},\quad {\bf t}\in {\mathbb R}^{n},
$$
is bounded, continuous and Levitan $(x,\Lambda',A,{\mathcal P}_{K})$-almost periodic, resp. bounded, continuous and Bebutov 
$(x,\Lambda',A,{\mathcal P}_{K})$-uniformly recurrent.
\end{prop}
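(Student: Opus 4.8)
The plan is to follow the proof of Proposition~\ref{mkmkpik} verbatim in spirit, replacing the infinite convolution product by the ordinary convolution $H=h\ast f$ and transferring the role of the integrability of $(R(t))_{t>0}$ on $(0,\infty)$ to the integrability of $h$ on the whole of ${\mathbb R}^{n}$. I will carry out the Bebutov $(x,\Lambda',A,{\mathcal P}_{\bf K})$-uniformly recurrent case, the Levitan case being completely analogous with relative density of translates replacing a sequence escaping to infinity.

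First, boundedness and continuity are immediate: from $\|H({\bf t})\|\le \|h\|_{L^{1}({\mathbb R}^{n})}\,\|f\|_{\infty}$ we get boundedness, and writing $H({\bf t})=\int_{{\mathbb R}^{n}}h({\bf s})f({\bf t}-{\bf s})\,d{\bf s}$ the continuity follows from the dominated convergence theorem with dominating function $2\|f\|_{\infty}|h(\cdot)|$ and the continuity of $f$. Since $A$ is closed, $Af$ is bounded, and the scalar factor $h$ passes trivially through the integral, the standard interchange of a closed operator with a Bochner integral yields $H({\bf t})\in D(A)$ and
\[
AH({\bf t})=\int_{{\mathbb R}^{n}}h({\bf s})\,Af({\bf t}-{\bf s})\,d{\bf s},\qquad {\bf t}\in {\mathbb R}^{n};
\]
this is the exact counterpart of the identity $AF(t)=\int_{0}^{\infty}R(s)Af(t-s)\,ds$ used before, and it exhibits $AH({\bf t})$ as the element of $\rho(H({\bf t}))=A(H({\bf t}))$ required by the definition.

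For the recurrence estimate, fix $\epsilon>0$ and a non-empty compact set $K\subseteq {\mathbb R}^{n}$. Because $h\in L^{1}({\mathbb R}^{n})$, I choose a closed ball $L=\overline{B(0,M)}$ so large that $(\|f\|_{\infty}+\|Af\|_{\infty})\int_{{\mathbb R}^{n}\setminus L}|h({\bf s})|\,d{\bf s}<\epsilon/2$; this is the analogue of the tail estimate \eqref{ok}, with the non-compact support of $h$ handled by $L^{1}$-integrability instead of by integrability of $R$ near $+\infty$. Set $K':=K-L$, a compact set, and note that ${\bf t}-{\bf s}\in K'$ whenever ${\bf t}\in K$ and ${\bf s}\in L$. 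Applying the Bebutov uniform recurrence of $f$ on $K'$ produces a sequence $(\tau_{k})$ in $\Lambda'$ with $|\tau_{k}|\to\infty$ and $\|h\|_{L^{1}({\mathbb R}^{n})}\sup_{{\bf u}\in K'}\|f({\bf u}+\tau_{k})-Af({\bf u})\|<\epsilon/2$ for all large $k$ (the analogue of \eqref{sqz}); the membership in $P_{K}=C(K)$ is automatic by continuity. Splitting $\int_{{\mathbb R}^{n}}=\int_{L}+\int_{{\mathbb R}^{n}\setminus L}$ in
\[
H({\bf t}+\tau_{k})-AH({\bf t})=\int_{{\mathbb R}^{n}}h({\bf s})\bigl[f({\bf t}+\tau_{k}-{\bf s})-Af({\bf t}-{\bf s})\bigr]\,d{\bf s},
\]
and bounding the tail by the first inequality and the integral over $L$ by the second, I obtain $\sup_{{\bf t}\in K}\|H({\bf t}+\tau_{k})-AH({\bf t})\|<\epsilon$ for all large $k$.

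The one genuine obstacle --- glossed over, if tacitly, in Proposition~\ref{mkmkpik} --- is that the enlarged compact $K'$, and hence the extracted sequence $(\tau_{k})$, depends on $\epsilon$, whereas Definition~\ref{nafaks-metkl} demands a single sequence attached to $K$ alone along which the supremum tends to $0$. I would remove this dependence by diagonalization: for each $m\in{\mathbb N}$ run the argument with $\epsilon=1/m$, obtaining a ball $L_{m}$, a compact $K'_{m}=K-L_{m}$ and an index giving a point $\sigma_{m}\in\Lambda'$ with $|\sigma_{m}|\ge m$ and $\sup_{{\bf t}\in K}\|H({\bf t}+\sigma_{m})-AH({\bf t})\|<1/m$; the sequence $(\sigma_{m})_{m\in{\mathbb N}}$ then satisfies $|\sigma_{m}|\to\infty$ and $\lim_{m\to\infty}\sup_{{\bf t}\in K}\|H({\bf t}+\sigma_{m})-AH({\bf t})\|=0$, as required. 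In the Levitan case this subtlety disappears, since there $\epsilon$ and $K$ are both fixed in the definition before the radius $l$ is produced, so the same splitting --- using the relatively dense set of $(\epsilon/(2\|h\|_{L^{1}}),K')$-almost periods of $f$ to furnish, inside each $B({\bf t}_{0},l)$, an almost period of $H$ --- finishes the proof without any diagonal argument.
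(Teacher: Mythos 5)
Your proposal is correct and follows essentially the same route as the paper, which proves this proposition only by remarking that the method of Proposition~\ref{mkmkpik} carries over: you reproduce that method, with the tail of $h$ over the complement of a large ball $L$ playing the role of $\int_{a-c}^{\infty}\|R(s)\|\,ds$ and the enlarged compact $K'=K-L$ playing the role of $[c,b]$. Your diagonalization over $\epsilon=1/m$ to produce a single sequence $(\sigma_{m})$ attached to $K$ alone is a welcome repair of a step that the paper's proof of Proposition~\ref{mkmkpik} indeed leaves tacit, and the rest (closedness of $A$ passing through the Bochner integral, the Levitan case needing no diagonal argument) matches the intended argument.
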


\begin{rem}\label{ejha1}
\begin{itemize}
\item[(i)] Unfortunately, we cannot formulate a satisfactory analogue of Proposition \ref{mkmkpik1} in the case that there exist two finite real numbers $b>0$ and $c>0$ such that $\| f({\bf t})\|\leq c(1+|{\bf t}|)^{b},$ ${\bf t}\in {\mathbb R}^{n}$ and $h(\cdot)[1+|\cdot|]^{b}\in L^{1}({\mathbb R}^{n}),$ when the function $H(\cdot)$ is clearly well-defined. 
\item[(ii)] Let us consider the class of Levitan $(x,{\mathbb R},{\rm I},{\mathcal P}_{K})$-almost periodic functions. If we suppose, additionally, that for each $N>0$ and $\epsilon>0$ there exists a 
relatively dense set $E_{\eta;N}$ of $(\eta,N)$-almost periods of $f(\cdot)$ such that $E_{\eta;N} \pm E_{\eta;N} \subseteq E(\epsilon, N),$ then the same property holds for the function $H(\cdot).$ Taken together with the statement of Proposition \ref{mkmkpik1}, this provides a proper extension of the well known result \cite[Theorem 13]{sjerp}, proved for the first time by D. Bugajewski, X. Gan and P. Kasprzak. 
\end{itemize}
\end{rem}

Now we will briefly explain how we can incorporate Proposition \ref{mkmkpik1} in the analysis of the existence and uniqueness of the
Levitan almost periodic type solutions for some classes of (fractional) partial differential equations:\vspace{0.1cm}

1. We will consider first
the inhomogeneous heat equation in ${\mathbb R}^{n}$.
Recall,
a unique solution of the heat equation $u_{t}(t,x)=u_{xx}(t,x),$ $t\geq 0,\ x\in {\mathbb R}^{n};$ $u(0,x)=F(x),$ $x\in {\mathbb R}^{n}$ is given by the action of 
Gaussian semigroup 
\begin{align*}
F\mapsto (G(t)F)(x)\equiv \bigl(4\pi t\bigr)^{-n/2}\int_{{\mathbb R}^{n}}e^{-|y|^{2}/4t}F(x-y)\, dy,\quad t>0,\ x\in {\mathbb R}^{n}.
\end{align*}
Suppose now that $c\in {\mathbb C} \setminus \{0\},$ a number $t>0$ is fixed and the function $F(\cdot)$ is bounded, continuous and Levitan $(x,\Lambda',c,{\mathcal P}_{K})$-almost periodic, resp. bounded, continuous and Bebutov 
$(x,\Lambda',c,{\mathcal P}_{K})$-uniformly recurrent. Then an application of Proposition \ref{mkmkpik1} shows that the function $x\mapsto (G(t)F)(x),$ $x\in {\mathbb R}^{n}$ is likewise bounded, continuous and Levitan $(x,\Lambda',c,{\mathcal P}_{K})$-almost periodic, resp. bounded, continuous and Bebutov 
$(x,\Lambda',c,{\mathcal P}_{K})$-uniformly recurrent (cf. also S. Zaidman \cite[Examples 4, 5, 7, 8; pp. 32-34]{30}, which can be also used for our purposes).

2. Suppose that $1<\beta<2.$ The fractional diffusion-wave equation\index{equation!fractional diffusion-wave}
$$
{\mathbb D}_{t}^{(\beta)}u(t,x)=\Delta u(t,x),\quad t>0,\ x\in {\mathbb R}^{n},
$$
subjected with the initial conditions
$$
u(0, x) = u_{0}(x),\ \ u_{t}(0, x) = 0,
$$
where ${\mathbb D}_{t}^{(\beta)}u(t,x)$
is the Caputo-Dzhrbashyan fractional derivative, given by\index{fractional derivative!Caputo-Dzhrbashyan} {\small
$$
{\mathbb D}_{t}^{(\beta)}u(t,x):=\frac{1}{\Gamma(2-\beta)}\frac{\partial^{2}}{\partial^{2}t}\int^{t}_{0}(t-\tau)^{1-\beta}u(\tau,x)\, d\tau-t^{1-\beta}\frac{u_{t}(0,x)}{\Gamma(2-\beta)}-t^{1-\beta}\frac{u(0,x)}{\Gamma(1-\beta)},\quad t>0,
$$}
was studied by many authors (see, e.g., the research article \cite{kochubei} by A. N. Kochubei and references cited therein). A
unique solution $u(t,x)$ of this problem is obtained as the convolution with a Green kernel
$$
u(t,x)=\int_{{\mathbb R}^{n}}G(t,x-\xi)u_{0}(\xi)\, d\xi=\int_{{\mathbb R}^{n}}G(t,\xi)u_{0}(x-\xi)\, d\xi.
$$
Since the Green kernel $G(t,x)$ satisfies the estimate (cf. the research article \cite{izvestija} by A. V. Pskhu for more details; $c>0$ and $a>0$ denote the positive real constants here):
$$
|G(t,x)|\leq ct^{-n\beta/2}\gamma_{n}\bigl( |x| t^{-\beta/2}\bigr)E\bigl( |x| t^{-\beta/2}\bigr),
$$
where $\gamma_{n}(z):=1,$ if $n=1$ [$\gamma_{n}(z):=|\ln z|,$ if $n=2;$ $\gamma_{n}(z):=z^{2-n}$, if $n\geq 3$] and $E(z):=\exp(-az^{2/(2-\beta)}),$ it can be easily shown the following: If the function $x\mapsto u_{0}(x),$ $x\in {\mathbb R}^{n}$ is bounded, continuous and $(I',c)$-almost periodic ($(I',c)$-uniformly recurrent; Levitan $(x,\Lambda',c,{\mathcal P}_{K})$-almost periodic, Bebutov 
$(x,\Lambda',c,{\mathcal P}_{K})$-uniformly recurrent) for some $c\in {\mathbb C},$ then the solution $u(t,x)$ is bounded, continuous and $(I',c)$-almost periodic ($(I',c)$-uniformly recurrent; Levitan $(x,\Lambda',c,{\mathcal P}_{K})$-almost periodic, Bebutov 
$(x,\Lambda',c,{\mathcal P}_{K})$-uniformly recurrent) in the variable $x$ for every fixed value of variable $t>0;$ cf. \cite{rho} for the notion used.

3. Using the Laplace transform of vector-valued distributions, M. Kunzinger, E. A. Nigscha and N. Ortner \cite{kunzinger} have presented the explicit solutions for the Cauchy-Dirichlet problem of the iterated wave operator
$
( \Delta_{n}+\partial_{y}^{2}-\partial_{t}^{2} )^{m},
$
the iterated Klein-Gordon operator
$
( \Delta_{2n+1}+\partial_{y}^{2}-\partial_{t}^{2}-\xi^{2})^{m},
$
the iterated metaharmonic operator
$
( \Delta_{n}+\partial_{y}^{2}-p^{2} )^{m},
$
and the iterated heat operator
$
( \Delta_{n}+\partial_{y}^{2}-\partial_{t})^{m},
$
in the half-space $y>0.$ In particular, the authors have shown that, under certain logical assumptions, the Cauchy problem
\begin{align*}
\Bigl( & \partial_{x_{1}}^{2}+...+\partial_{x_{n}}^{2}+\partial_{y}^{2} \Bigr)^{2}=0\ \ \mbox{ in }\ \ (x,y)\in {\mathbb R}^{n} \times (0,\infty);
\\& u_{| y=0}=g_{0}(x),\ \bigl(\partial u_{y}\bigr)_{| y=0}=g_{1}(x)
\end{align*}
has a unique solution
\begin{align*}
u(x,y)=2\frac{\Gamma((n+3)/2)}{\pi^{(n+1)/2}}y^{3}&\int_{{\mathbb R}^{n} }g_{0}(x-\xi)\frac{d\xi}{\bigl( |\xi|^{2}+y^{2} \bigr)^{(n+3)/2}} \\&+\frac{\Gamma((n+1)/2)}{\pi^{(n+1)/2}}y^{2}\int_{{\mathbb R}^{n} }g_{1}(x-\xi)\frac{d\xi}{\bigl( |\xi|^{2}+y^{2} \bigr)^{(n+1)/2}} .
\end{align*}
Suppose now that the function $x\mapsto (g_{0}(x),g_{1}(x)),$ $x\in {\mathbb R}^{n}$ is bounded, continuous and $(I',c)$-almost periodic ($(I',c)$-uniformly recurrent; Levitan $(x,\Lambda',c,{\mathcal P}_{K})$-almost periodic, Bebutov 
$(x,\Lambda',c,{\mathcal P}_{K})$-uniformly recurrent) for some $c\in {\mathbb C}.$ Then it can be easily shown that the solution $u(x,y)$ is bounded, continuous and $(I',c)$-almost periodic ($(I',c)$-uniformly recurrent; Levitan $(x,\Lambda',c,{\mathcal P}_{K})$-almost periodic, Bebutov 
$(x,\Lambda',c,{\mathcal P}_{K})$-uniformly recurrent) in the variable $x$ for every fixed value of variable $y>0.$ This example and the first example given in this subsection can be also used to justify the introduction of multi-dimensional Levitan $\rho$-almost periodic type functions with $\rho$ being not a function but a general binary relation; cf. \cite{rho} for more details concerning this issue.

4. We can similarly consider the existence and uniqueness of Levitan almost periodic type solutions of the inhomogeneous abstract Cauchy problems considered in \cite[Example 1.1]{marko-manuel-ap}, the backward wave equation considered in \cite[Example 8, p. 33]{30},
and provide certain applications to the abstract ill-posed Cauchy problems considered on \cite[pp. 543-545]{nova-selected}.

5. Concerning semilinear Cauchy problems, we will only note that the composition principle clarified in \cite[Theorem 7.1.18]{nova-selected} can be simply formulated for the uniformly Poisson $c$-stable functions. Keeping in  mind Proposition \ref{mkmkpik1}, we can simply consider the existence and uniqueness of the uniformly Poisson $c$-stable solutions for the class of semilinear Hammerstein
integral equation of convolution type on ${\mathbb R}^{n}$; see \cite[p. 470]{nova-selected} and the corresponding application made in \cite[Section 3]{sds}. Without going into full details, we will only note that the statement of \cite[Theorem 12]{sjerp} and some composition principles clarified in \cite[Subsection 6.1.5]{nova-selected} can probably be formulated in our new framework.

6. Before proceeding to the next subsection, we would like to notice that the convolution invariance of Levitan $N$-almost periodicity has recently been considered by A. Nawrocki in \cite[Theorem 4.2, Theorem 4.4, Theorem 4.8]{nawr}. Concerning these statements, clarified in the one-dimensional setting, we would like to make the following comments:

\begin{itemize}
\item[(i)] Theorem 4.2 can be formulated for the uniformly Poisson stable (Levitan $N$-almost periodic) functions $f : {\mathbb R} \rightarrow Y.$ Then the convolution $f\ast g$ exists and it is a  uniformly Poisson stable (Levitan $N$-almost periodic) function.
\item[(ii)] Theorem 4.4 can be formulated for the Stepanov bounded, uniformly Poisson stable (Levitan $N$-almost periodic) functions $f : {\mathbb R} \rightarrow Y.$ The resulting convolution $f\ast g_{\lambda}$ exists and it is a bounded, uniformly Poisson stable (Levitan $N$-almost periodic) function.
\item[(iii)] Concerning Theorem 4.8, we will only note that the Stepanov unboundedness condition \cite[(4.2)]{nawr} is not suitable for the work with the uniformly Poisson stable functions. Let us assume, in place of this condition, that for each real number $\omega>0$ and for each strictly increasing sequence $(\alpha_{k})$ of positive integers we have 
$$
\limsup_{k\rightarrow +\infty}\int^{(a_{k}+1)\omega}_{a_{k}\omega}f(t)\, dt=+\infty.
$$ 
Then the resulting convolution $f\ast g_{\lambda}$ exists and it is a bounded, uniformly Poisson stable function. The statement can be also formulated for the vector-valued functions.
\end{itemize}

In \cite[Section 5]{nawr}, the author has applied the above results (cf. also \cite[Corollary 4.11-Corollary 4.14]{nawr})
in the analysis of the existence and uniqueness of Levitan $N$-almost periodic solutions of the linear ordinary differential equation of first order
$$
y^{\prime}(x)=\lambda y(x)+f(x),\quad x\in {\mathbb R}.
$$
The
statements of \cite[Lemma 5.5, Theorem 5.10(i)]{nawr} can be formulated for the uniformly Poisson stable solutions of this equation.

\subsection{On the wave equation in ${\mathbb R}^{n}$}\label{21666}

In this subsection, we will revisit our recent investigations of the wave equation in ${\mathbb R}^{n}$, where $n\leq 3.$
Let us consider first the following wave equation in ${\mathbb R}^{3}:$
\begin{align}\label{relax-wave}
u_{tt}(t,x)=d^{2}\Delta_{x}u(t,x),\quad x\in {\mathbb R}^{3},\ t>0;
\ \ u(0,x)=g(x),\ \ u_{t}(0,x)=h(x),
\end{align}
where $d>0,$ $g\in C^{3}({\mathbb R}^{3}: {\mathbb R})$ and  $h\in C^{2}({\mathbb R}^{3}: {\mathbb R}).$ By the Kirchhoff formula (see e.g., \cite[Theorem 5.4, pp. 277-278]{salsa}), the function\index{Kirchhoff formula}
\begin{align*}
 u(t,x)& :=\frac{\partial}{\partial t}\Biggl[ \frac{1}{4\pi d^{2}t}\int_{\partial B_{dt}(x)}g({\bf \sigma})\, d\sigma \Biggr]+\frac{1}{4\pi d^{2}t}\int_{\partial B_{dt}(x)}g({\bf \sigma})
\, d\sigma
\\ & =\frac{1}{4\pi}\int_{\partial B_{1}(0)}g(x+dt{\bf \omega})\, d\omega
+\frac{dt}{4\pi}\int_{\partial B_{1}(0)}\nabla g(x+dt{\bf \omega}) \cdot {\bf \omega}\, d\omega
\\& +\frac{t}{4\pi}\int_{\partial B_{1}(0)}h(x+dt{\bf \omega})\, d\omega
,\quad t\geq 0,\ x\in {\mathbb R}^{3},
\end{align*}
is a unique solution of problem \eqref{relax-wave} which belongs to the class $C^{2}([0,\infty)\times {\mathbb R}^{3})$. Fix now a number $t_{0}>0.$ Then the function $x\mapsto u(t_{0},x),$ $x\in {\mathbb R}^{3}$ will be Levitan $N$-almost periodic (uniformly Poisson stable) provided that the function $(g(\cdot),\ \nabla g(\cdot), h(\cdot))$
is  Levitan $N$-almost periodic (uniformly Poisson stable), for example.

Let us consider now the following wave equation in ${\mathbb R}^{2}:$\index{Poisson formula}
\begin{align}\label{relax-wave2}
u_{tt}(t,x)=d^{2}\Delta_{x}u(t,x),\quad x\in {\mathbb R}^{2},\ t>0;
\ \ u(0,x)=g(x),\ \ u_{t}(0,x)=h(x),
\end{align}
where $d>0,$ $g\in C^{3}({\mathbb R}^{2}: {\mathbb R})$ and  $h\in C^{2}({\mathbb R}^{2}: {\mathbb R}).$ By the Poisson formula (see e.g., \cite[Theorem 5.5, pp. 280-281]{salsa}), we know that the function{\small
\begin{align*}
u(t,x)& :=\frac{\partial}{\partial t}\Biggl[ \frac{1}{2\pi d}\int_{\partial B_{dt}(x)}\frac{g({\bf \sigma})}{\sqrt{d^2t^{2}-|x-y|^{2}}}\, d\sigma \Biggr]+\frac{1}{2\pi d}\int_{\partial B_{dt}(x)}\frac{h({\bf \sigma})}{\sqrt{d^2t^{2}-|x-y|^{2}}}\, d\sigma
\\& =d\int_{B_{1}(0)}\frac{g(x+dt\sigma)}{\sqrt{1-|{\sigma}|^{2}}}\, d\sigma +d^{2}t\int_{B_{1}(0)}\frac{\nabla g(x+dt\sigma) \cdot \sigma}{\sqrt{1-|{\sigma}|^{2}}}\, d\sigma
\\& +  dt\int_{B_{1}(0)}\frac{h(x+dt\sigma)}{\sqrt{1-|{\sigma}|^{2}}}\, d\sigma,\quad t\geq 0,\ x\in {\mathbb R}^{2},
\end{align*}}
is a unique solution of problem \eqref{relax-wave2} which belongs to the class $C^{2}([0,\infty) \times {\mathbb R}^{3})$. Fix now a number $t_{0}>0.$ Similarly as above, we have that the function $x\mapsto u(t_{0},x),$ $x\in {\mathbb R}^{2}$ will be Levitan $N$-almost periodic (uniformly Poisson stable) provided that the function $(g(\cdot),\ \nabla g(\cdot), h(\cdot))$
is  Levitan $N$-almost periodic (uniformly Poisson stable).

Let us finally consider the wave equation  $u_{tt}=a^{2}u_{xx}$ in domain $\{(x,t) : x\in {\mathbb R},\ t>0\},$ equipped with the initial conditions $u(x,0)=f(x)\in C^{2}({\mathbb R})$ and $u_{t}(x,0)=g(x)\in C^{1}({\mathbb R})$. As is well known, its unique regular 
solution
is given by
the famous d'Alembert formula
$$
u(x,t)=\frac{1}{2}\bigl[ f(x-at) +f(x+at) \bigr]+\frac{1}{2a}\int^{x+at}_{x-at}g(s)\, ds,\quad x\in {\mathbb R}, \ t>0.
$$  
Our consideration from \cite[Example 1.2]{marko-manuel-ap} shows that the solution 
$u(x,t)$ can be extended to the whole real line in time variable and that $u(x,t)$ will be uniformly $c$-Poisson stable in $(x,t)\in {\mathbb R}^{2},$ provided that the function
$t\mapsto (f(\cdot),\int^{\cdot}_{0}g(s)\, ds),$ $t\in {\mathbb R}$ is uniformly $c$-Poisson stable, with the meaning clear ($c\in {\mathbb C}\setminus \{0\}$).
We would like to emphasize that it is not clear how one can prove that the solution $u(x,t)$ will be Levitan $N$-almost periodic in $(x,t)\in {\mathbb R}^{2},$ provided that the functions
 $f(\cdot)$ and $\int^{\cdot}_{0}g(s)\, ds$ are Levitan $N$-almost periodic.

\section{Conclusions and final remarks}\label{toi}

In this paper, we have investigated the Levitan and Bebutov approaches to the metrical approximations by trigonometric polynomials and $\rho$-periodic type functions, providing also certain applications to
the abstract Volterra integro-differential equations and the partial differential equations. We have proved many structural results for the introduced classes of functions and propose several open problems, remarks and illustrative examples.

Finally, we would like to mention some topics not considered in our former work:\vspace{0.1cm}

1. The Levitan and Bebutov approaches to the metrical approximations by trigonometric polynomials and $\rho$-periodic type functions can be further generalized using the approaches of Stepanov, Weyl and Besicovitch. Many structural results established in \cite{chaouchi-metr} can be reformulated in this context.\vspace{0.1cm}

2. It is well known that every Levitan $N$-almost periodic function $f: {\mathbb R} \rightarrow {\mathbb C}$ can be represented by the
uniform limit of ratios of scalar-valued almost periodic functions (see A. G. Baskakov \cite{baskakov}). We will not discuss here the question whether this result continues to hold for multi-dimensional 
Levitan $N$-almost periodic functions.\vspace{0.1cm}

3. In this paper, we have not analyzed ${\mathbb D}$-asymptotically Levitan (Bebutov) almost periodic type functions in general metric. Concerning this issue, we will only note that we have recently 
provided some new results about the existence and uniqueness of ${\mathbb D}$-asymptotically almost automorphic solutions of 
the inhomogeneous wave equation 
\begin{align*}
\notag & u_{tt}(t,x)-d^{2}\Delta_{x}u(t,x)=f(t,x),\quad x\in {\mathbb R}^{2},\ t>0;
\\  & u(0,x)=g(x),\ \ u_{t}(0,x)=h(x),
\end{align*}
where $d>0,$ $f(t,x)$ is continuously differentiable in the variable $t\in {\mathbb R}$ and continuous in the variable $x\in {\mathbb R},$
$g\in C^{2}({\mathbb R}^{2}: {\mathbb R})$ and $h\in C^{1}({\mathbb R}^{2}: {\mathbb R}).$ It is well known that the unique solution of this problem is given by
\begin{align*}
u(x,t)&=\frac{1}{2}\bigl[ g(x-at) +g(x+at) \bigr]+\frac{1}{2a}\int^{x+at}_{x-at}h(s)\, ds
\\& +\frac{1}{2d}\int^{t}_{0}\biggl[ \int^{x+d(t-s)}_{x-d(t-s)}f(r,s) \, dr \Biggr]\, ds,\quad x\in {\mathbb R}, \ t>0,
\end{align*}
so that the conclusions established on \cite[pp. 541-542]{nova-selected} can be also formulated for ${\mathbb D}$-asymptotically uniformly Poisson stable solutions (defined in the obvious way), for example. Details can be left to the interested readers.

\end{document}